\newtheorem{theorem}{Theorem}[section]
\newtheorem{lemma} [theorem]{Lemma}
\begin{document}
	\title{Extremal Values of the Atom-Bond Connectivity Index for Trees with Given Roman Domination Numbers
	}
	\author{Waqar Ali$^{1}$, Mohamad Nazri Bin Husin$^{1}$, Muhammad Faisal Nadeem$^{2}$}
	\date{}
	\maketitle
	\vspace{-7mm}
	\begin{center}
		
		{\it\small 1 Special Interest Group on Modeling and Data Analytics, Faculty of Computer Science and Mathematics, Universiti Malaysia Terengganu, Kuala Nerus 21030, Terengganu}\\
		{\it\small 2 Department of Mathematics, COMSATS University Islamabad Lahore Campus, Lahore 54000 Pakistan.}\\
		{meharwaqaraali@gmail.com\\}
		
		{corresponding author nazri.husin@umt.edu.my\\}
		
		{mfaisalnadeem@ymail.com\\}
		
	\end{center}
	\begin{abstract}
		Consider that $\mathbb{G}=(\mathbb{X}, \mathbb{Y})$ is a simple, connected graph with $\mathbb{X}$ as the vertex set and $\mathbb{Y}$ as the edge set. The atom-bond connectivity ($ABC$) index is a novel topological index that Estrada introduced in Estrada et al. (1998). It is defined as
		$$
		A B C(\mathbb{G})=\sum_{xy \in Y(\mathbb{G})} \sqrt{\frac{\zeta_x+\zeta_y-2}{\zeta_x \zeta_y}}
		$$
		where $\zeta_x$ and $\zeta_x$ represent the degrees of the vertices $x$ and $y$, respectively. In this work, we explore the behavior of the $A B C$ index for tree graphs. We establish both lower and upper bounds for the $A B C$ index, expressed in terms of the graph's order and its Roman domination number. Additionally, we characterize the tree structures that correspond to these extremal values, offering a deeper understanding of how the Roman domination number ($RDN$) influences the $A B C$ index in tree graphs.

	\end{abstract}
	\textbf{Key words:} $ABC$ index; Roman domination number, Tree, chemical graph theory, Extremal values.
	\section{Introduction}
	
	~~~~In theoretical chemistry, mathematical chemistry is the study of chemistry without reference to quantum mechanics, aimed at explaining and predicting the properties of molecules. Graph theory is used to represent chemical events in the important topic of chemical graph theory within mathematical chemistry. The chemical sciences have advanced significantly as a result of this strategy.
	
	A molecular graph is a fundamental graph where the edges represent bonds between atoms, and the vertices represent the atoms themselves. Hydrogen atoms are often omitted from these representations. According to IUPAC terminology, a topological index is a number associated with the chemical structure. This value is used to establish correlations between different physical and chemical attributes, biological activity, and chemical reactivity, and the chemical structure, see \cite{karelson1996quantum, gutman1972graph, kier2012molecular, kier1986molecular, todeschini2008handbook}.
	
	The Randi\'{c} connectivity index is one of the most well-known topological indices, supported by a solid mathematical foundation and widely applied in pharmacology and chemistry. In 1998, Estrada et al. \cite{estrada1998atom} introduced the $ABC(\mathbb{G})$ index as a notable alternative to the Randi\'{c} index. According to Furtula \cite{furtula2016atom}, the $ABC$ index ranks among the leading degree-based molecular descriptors.

	
	\begin{equation}\label{equ1}
		A B C(\mathbb{G})=\sum_{xy \in Y(\mathbb{G})} \sqrt{\frac{\zeta_x+\zeta_y-2}{\zeta_x \zeta_y}}
	\end{equation}
	
	\noindent where $Y(\mathbb{G})$ represents the set of edges in the graph $\mathbb{G}$, and $\zeta_x$ and $\zeta_y$ are the degrees of the vertices $x$ and $y$ connected by the edge $xy$. The index has been widely used to predict molecular stability, boiling points, and other properties, and has proven to be a valuable tool in QSAR/QSPR studies and drug design. Chen and Das \cite{chen2018solution} confirmed the conjecture that the Tur\'{a}n graph maximizes the $ABC$ index among graphs with a given chromatic number, resolving a problem posed by Zhang et al. \cite{zhang2016maximum}. 
	Zheng et al. \cite{zheng2020bounds} established bounds for the general $ABC$ index for connected graphs with fixed maximum degree, characterizing the extremal graphs for specific parameter ranges. Numerous studies have been conducted on this topological indicator, and research on it is still ongoing, see \cite{chen2012atom, lin2017minimal, vassilev2012minimum, lin2013minimal, chen2012some}.

	Prior research \cite{das2010comparison, xu2012relationships, das2016comparison} has investigated the relations between different topological indices and the $ABC$ index. Among other notable studies, Das and Trinajsti\'{c} \cite{das2010comparison} looked at the relationship between the $GA$ and $ABC$ index. Xinli Xu \cite{xu2012relationships} established correlations between the Harmonic index and several other indices, including the $ABC$ index, Randi\'{c} index, and the first Zagreb index, based on the order, size, and number of pendant vertices in the graph. Further research was done on the connection between the $ABC$ index and the distance-based variation of the $ABC_{\mathbb{G}}$ index by Das et al. \cite{das2016comparison}. Additionally, they determined which extremal trees reach these limits. In order to determine the structures of extremal graphs, Zhang et al. \cite{zhang2024extremal} investigated the extremal limits of the $ABS$ index for trees with certain matching and dominance values. In order to find graphs with the highest and least values of these indices, Wang et al. \cite{wang2018maximizing} looked into extremal multiplicative Zagreb indices in graphs with provided vertices and cut edges. Jamri et al. \cite{hasni2021randic, ahmad2022minimum} established both the extreme values of the $RI(\mathbb{T})$ with a specified $TDN$. Most recently, Bermudo et al. \cite{bermudo2024geometric, bermudo2020extremal} provided an maximum value for the $GA(\mathbb{T})$, based on their order and $TDN$, as well as both extreme values for the $RI(\mathbb{T})$, based on their order and domination number, and research on the bounds of various topological indices with given parameters is still ongoing, as noted in \cite{li2024greatest, wang2024maximum, noureen2024tricyclic, ahmad2023zagreb}.
	
	In this study, we take a look at a simple, uncorrected connected graph $\mathbb{G}$, which has a set of vertices $\mathbb{X}$ and an edge set $\mathbb{Y}$. An edge in graph $\mathbb{G}$ connecting two vertices, $x$ and $y$, is represented by the symbol $xy$. The open neighborhood of any vertex $y \in \mathbb{X}$ is defined as $N(y) = {x \in \mathbb{X} \mid xy \in \mathbb{Y}}$, whereas $N[y] = N(y) \cup {y}$ indicates the closed neighbor. The size of an open neighborhood, $|N(x)|$, around a vertex $x$, denoted by $\zeta_x$, is called its degree. A vertex $x$ is called a leaf if $\zeta_x = 1$. The longest path between any two leaves in a tree is defined as its diameter. A diameter path in a tree $\mathbb{T}$ is denoted by $P_{d+1} = {x_1, x_2, \ldots, x_{d+1}}$, where the path between vertices $x_1, x_2, \ldots, x_{d+1}$ reaches this maximum length.
	
	For a given vertex $y \in \mathbb{X}$, the graph $\mathbb{G} - {y}$ is obtained by removing $y$, which results in a new vertex set $\mathbb{X} - {y}$ and an edge set $\mathbb{Y} - {yx \mid x \in N(y)}$. Similarly, for an edge $e \in \mathbb{Y}$, the graph $\mathbb{G} - {e}$ retains the original vertex set $\mathbb{X}$ but removes the edge $e$, resulting in the edge set $\mathbb{Y} - {e}$.
	
	For $l$ vertices $x_1, \ldots, x_l$ or edges $e_1, \ldots, e_l$, we define the graph $\mathbb{G} - {x_1, \ldots, x_l}$ as $\left(\mathbb{G} - {x_1, \ldots, x_{l-1}}\right) - {x_l}$, and similarly, $\mathbb{G} - {e_1, \ldots, e_l}$ as $\left(\mathbb{G} - {e_1, \ldots, e_{l-1}}\right) - {e_l}$. The path graph $\mathbb{P}_n$, and the star graph $\mathbb{S}_n$. We direct the reader to \cite{west2001introduction} for definitions of any other notation and terminology not covered here.
	
	The $RDN$ of a graph $\mathbb{G}$ is the minimum weight of a Roman dominating function on $\mathbb{G}$. An $RDN$ is a function $\aleph: \mathbb{X}(\mathbb{G}) \rightarrow {0, 1, 2}$ such that every vertex $y \in \mathbb{X}(\mathbb{G})$ with $\aleph(y) = 0$ is adjacent to at least one vertex $x \in \mathbb{X}(\mathbb{G})$ where $\aleph(x) = 2$. The weight of $\aleph$ is the sum $\aleph(\mathbb{X}) = \sum_{y \in \mathbb{X}(\mathbb{G})} \aleph(y)$ \cite{cockayne2004roman}. Essentially, the RDN ensures that any unguarded vertex (with $\aleph(y) = 0$) is adjacent to a heavily guarded vertex (with $\aleph(x) = 2$), and it is denoted by $\Gamma_R$.


	\section{Preliminary Results}
	We introduce a number of lemmas in this section that will be utilized to demonstrate the primary theorem. Using the Mathematics program, all single and double variables function-related inequalities in these lemmas and the main theorem's proof have been confirmed.
	
	\begin{lemma}\label{lem1}
		Suppose $m(a) =(a-1) \sqrt{\frac{a-1}{a}}-(a-2) \sqrt{\frac{a-2}{a-1}}$ with $a \geq 3$. Then, $m(a)$ is increasing function. 
	\end{lemma}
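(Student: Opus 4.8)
The plan is to reduce this two-term difference to a convexity statement about a single auxiliary function. Define
\[
f(t) = t\sqrt{\frac{t}{t+1}} = \frac{t^{3/2}}{(t+1)^{1/2}}, \qquad t>0 ,
\]
so that $m(a) = f(a-1) - f(a-2)$. Since $a \geq 3$ forces $a-2 \geq 1 > 0$, both arguments lie in the domain of $f$, where $f$ is smooth; hence $m$ is differentiable with $m'(a) = f'(a-1) - f'(a-2)$. Thus it suffices to show that $f'$ is increasing on $(0,\infty)$, i.e.\ that $f'' > 0$ there.

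First I would compute the first derivative. Differentiating $f(t) = t^{3/2}(t+1)^{-1/2}$ and pulling out the common factor $\tfrac12 t^{1/2}(t+1)^{-3/2}$ gives
\[
f'(t) = \frac{(2t+3)\sqrt{t}}{2\,(t+1)^{3/2}} .
\]
Writing $f' = g/h$ with $g(t) = (2t+3)\sqrt{t} = 2t^{3/2}+3t^{1/2}$ and $h(t) = 2(t+1)^{3/2}$, I would then compute $g'h - gh'$, factor out $(t+1)^{1/2}$, and check that the remaining bracket collapses to the single monomial $3t^{-1/2}$. This yields
\[
f''(t) = \frac{g'h - gh'}{h^{2}} = \frac{3}{4\,\sqrt{t}\,(t+1)^{5/2}} > 0 \qquad \text{for all } t>0 ,
\]
so $f'$ is strictly increasing on $(0,\infty)$.

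Finally, since $a-1 > a-2 > 0$ whenever $a \geq 3$, monotonicity of $f'$ gives $m'(a) = f'(a-1) - f'(a-2) > 0$, so $m$ is (strictly) increasing on $[3,\infty)$, which is the claim. The only delicate point is the algebra in the second derivative: one must carefully factor $(t+1)^{1/2}$ out of $g'h - gh'$ so that the messy expression simplifies to $3t^{-1/2}(t+1)^{1/2}$ — this cancellation is exactly what makes $f''$ manifestly positive. (Alternatively, one can write $m(a) = \int_{a-2}^{a-1} f'(s)\,ds$ and note that once $f''>0$ is established, shifting the interval of integration to the right raises the integrand pointwise; but the derivative computation remains the crux in either route.)
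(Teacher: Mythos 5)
Your proof is correct, and it takes a genuinely different --- and in fact stronger --- route than the paper's. The paper sets $f(a)=(a-1)\sqrt{\tfrac{a-1}{a}}$, shows $f'(a)>0$, and concludes only that $m(a)=f(a)-f(a-1)\ge 0$; that is, the paper's argument establishes \emph{nonnegativity} of $m$, not the \emph{monotonicity} claimed in the lemma statement (and the monotonicity is what is actually used later, in Lemma \ref{lem3}, to deduce $f_1(a)\ge f_1(3)$). You instead go one derivative further: writing $m(a)=f(a-1)-f(a-2)$ with $f(t)=t^{3/2}(t+1)^{-1/2}$, you compute
\[
f'(t)=\frac{(2t+3)\sqrt{t}}{2(t+1)^{3/2}},\qquad f''(t)=\frac{3}{4\sqrt{t}\,(t+1)^{5/2}}>0,
\]
both of which check out (the cancellation in $g'h-gh'$ does collapse to $3t^{-1/2}(t+1)^{1/2}$), so $f'$ is strictly increasing and $m'(a)=f'(a-1)-f'(a-2)>0$. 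This convexity argument is exactly the missing ingredient: positivity of $f'$ alone cannot yield monotonicity of a first difference of $f$, whereas positivity of $f''$ does. In short, your proof delivers the lemma as stated, while the paper's own proof delivers only the weaker conclusion $m(a)\ge 0$; if anything, your argument should replace (or supplement) the one in the text.
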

	\begin{proof}
		Suppose that $f(a)=(a-1) \sqrt{\frac{a-1}{a}}$. The derivative is  $f'(a) = \sqrt{\frac{a-1}{a}} + 
		\frac{a-1}{2a^2}\sqrt{\frac{a}{a-1}}$. The expression for \( f'(a) \) is positive for \( a \geq 2 \). Since \( f'(a) > 0 \) in this range, the function is increasing.	Thus, \( f(a) \) is increasing for \( a \geq 2 \). Therefore, $m(a)=f(a)-f(a-1)\geq 0$. 
	\end{proof}
	
	\begin{lemma}\label{lem2}
		Suppose $q(a) = \sqrt{\frac{a+b-2}{ab}}-\sqrt{\frac{a+b-3}{(a-1)b}}$ with $b$, and $a\geq 3$. Then, $q(a)$ is decreasing function for any $b \geq 2$.
	\end{lemma}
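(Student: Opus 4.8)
The plan is to follow the template of Lemma~\ref{lem1}: express $q$ as a difference of two values of a single auxiliary function and reduce the behaviour of $q$ to an analytic property of that function. Put $g(a)=\sqrt{\tfrac{a+b-2}{ab}}$ for $a\ge 2$, with $b\ge 2$ held fixed, so that $q(a)=g(a)-g(a-1)$ and therefore $q'(a)=g'(a)-g'(a-1)$. Everything then reduces to controlling $g'$ on $[2,\infty)$.

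First I would record the elementary identity $\tfrac{a+b-2}{ab}=\tfrac1b+\tfrac{b-2}{ab}$. Since $b\ge 2$, the term $\tfrac{b-2}{ab}$ is non-negative and non-increasing in $a$, so the radicand, hence $g$ itself, is non-increasing on $[2,\infty)$; this already fixes the sign of $q(a)=g(a)-g(a-1)$. To pin down the monotonicity of $q$ I would differentiate: a short computation gives $g'(a)=\dfrac{2-b}{2\sqrt b\,a^{2}}\Bigl(\tfrac{a+b-2}{a}\Bigr)^{-1/2}$, and then $g''(a)$; after pulling out the manifestly positive factors (a power of $a$ and a power of the radicand $\tfrac{a+b-2}{a}$), the sign of $g''$ is governed by a short polynomial expression in $a$ and $b$, which one verifies on the region $a\ge 2$, $b\ge 2$ (the case $b=2$ being degenerate: $g$ is constant and $q\equiv 0$). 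Knowing the sign of $g''$, i.e. the monotonicity of $g'$, determines the sign of $q'(a)=g'(a)-g'(a-1)$ and hence settles the monotonicity claim. Alternatively one can bypass $g''$ and argue discretely, comparing $g(a+1)-g(a)$ with $g(a)-g(a-1)$ directly by rationalising each difference, in the spirit of Lemma~\ref{lem1}.

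The technical heart, and the main obstacle, is this sign analysis: because $g'$ and $g''$ carry a nested radical $\bigl(\tfrac{a+b-2}{a}\bigr)^{\pm1/2}$, one cannot read off the sign directly but must first clear that radical and the powers of $a$, reducing to a genuine polynomial inequality in the \emph{two} variables $a,b$; carrying the inequalities through this reduction without sign slips, over the whole two-parameter range, is the delicate part, which is presumably why the authors confirm it with a computer algebra system. A convenient device is the substitution $t=\tfrac{b-2}{a}\ge 0$ (or $t=\tfrac{b-2}{a(a-1)}$ in the discrete version), which collapses the two-variable inequality to a one-variable one and should make the remaining verification routine.
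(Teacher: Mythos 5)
Your reduction to the auxiliary function $g(a)=\sqrt{\tfrac{a+b-2}{ab}}$ with $q(a)=g(a)-g(a-1)$ is the right move, and your formula for $g'$ is correct; the trouble is that you stop exactly at the step that decides everything, and if you actually carry it out it contradicts the lemma. From $g'(a)=\tfrac{2-b}{2\sqrt{b}\,a^{3/2}\sqrt{a+b-2}}$ one reads off at once that for $b>2$ the derivative is negative while $|g'(a)|$ is a product of positive decreasing factors of $a$, so $g'$ is increasing (i.e.\ $g$ is convex); hence $q'(a)=g'(a)-g'(a-1)>0$ and $q$ is \emph{increasing} in $a$, not decreasing (for $b=2$ it is identically $0$). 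A numerical check confirms this: for $b=3$ one has $q(3)=\tfrac23-\tfrac{1}{\sqrt2}\approx-0.040$ but $q(4)=\sqrt{5/12}-\tfrac23\approx-0.021$. So the statement as given is false for $b>2$, and a proof plan that defers the sign analysis to an unspecified ``polynomial verification'' while asserting it will ``settle the monotonicity claim'' conceals precisely the point where the claim breaks. The paper's own proof fails at the same spot: it writes the displayed equivalence as if one could divide by $2-b<0$ without reversing the inequality, and then justifies the final comparison by differentiating $k$ in $b$ even though the two quantities being compared differ in $a$, not $b$.

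What is true, and what is actually used later, is the opposite monotonicity together with the resulting lower bound: since $q$ is increasing in $a$, its infimum over $a\geq 3$ is at $a=3$, and $q(3,b)=\sqrt{\tfrac{b+1}{3b}}-\tfrac{1}{\sqrt2}\to\tfrac{1}{\sqrt3}-\tfrac{1}{\sqrt2}\approx-0.1298$ as $b\to\infty$, which is where the constant $-0.1296$ quoted in Lemma \ref{lem3} comes from. (Had $q$ really been decreasing in $a$, the relevant infimum would be $\lim_{a\to\infty}q(a,b)=0$, inconsistent with that constant.) Your opening observation --- that $\tfrac{a+b-2}{ab}=\tfrac1b+\tfrac{b-2}{ab}$ is non-increasing in $a$, so $q\leq 0$ --- is correct and worth keeping; with the conclusion corrected to ``increasing,'' your convexity argument closes the proof completely, with no computer verification and no two-variable polynomial inequality needed.
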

	\begin{proof}
		We have that
		$q'(a)= \frac{-b+2}{2 \sqrt{b} a^{\frac{3}{2}} \sqrt{a+b-2}}-\frac{-b+2}{2 \sqrt{b}(a-1)^{\frac{3}{2}} \sqrt{a+b-3}}\geq 0 \iff \frac{1}{ a^{\frac{3}{2}} \sqrt{a+b-2}}\geq \frac{1}{(a-1)^{\frac{3}{2}} \sqrt{a+b-3}}$.
		If we denote $k(b)=\frac{1}{ a^{\frac{3}{2}} \sqrt{a+b-2}}$, since $k'(b)=\frac{-1}{2a^{\frac{3}{2}}(a+b-2)^{\frac{3}{2}}}<0$  for $b\geq 2$, $k(b)$ is a decreasing function. Thus, it follows that $\frac{1}{ a^{\frac{3}{2}} \sqrt{a+b-2}} < \frac{1}{(a-1)^{\frac{3}{2}} \sqrt{a+b-3}}$. Hence, we conclude that $q(a)$ is a decreasing function for $a\geq 3$ and $b\geq 2$.
	\end{proof}
	\begin{lemma}\label{lem3}
		Suppose $\Xi (a,b) =(a-1) \sqrt{\frac{a-1}{a}}+\sqrt{\frac{a+b-2}{ab}}-(a-2) \sqrt{\frac{a-2}{a-1}}-\sqrt{\frac{a+b-3}{(a-1)b}}>\frac{\sqrt{5}}{2\sqrt{2}}$ with $a \geq 3$, and $b\geq 2$.
	\end{lemma}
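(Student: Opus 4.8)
The plan is to exploit the fact that $\Xi$ splits exactly along the two lemmas already proved: $\Xi(a,b)=m(a)+q(a)$, where $m$ is the one-variable function of Lemma~\ref{lem1} and $q$ is the function of Lemma~\ref{lem2}. So I would peel off the variable $a$ first. By Lemma~\ref{lem1}, $m$ is increasing on $[3,\infty)$, hence $m(a)\ge m(3)=2\sqrt{2/3}-\sqrt{1/2}$. For the term $q$, the derivative-sign computation inside the proof of Lemma~\ref{lem2} shows that, for each fixed $b\ge 2$, $q(a,b)$ is monotone in $a$ with $q(3,b)\le 0$ and $q(a,b)\to 0$ as $a\to\infty$; in particular $q(a,b)\ge q(3,b)$ for all $a\ge 3$. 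Adding the two, $\Xi(a,b)\ge \Xi(3,b)$, so it suffices to bound $\Xi(3,b)$ from below over $b\ge 2$.

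Next I would remove the variable $b$. Substituting $a=3$ and using $2\sqrt{1/2}=\sqrt2$ gives $\Xi(3,b)=2\sqrt{2/3}+\sqrt{\tfrac{b+1}{3b}}-\sqrt2$. Since $\tfrac{b+1}{3b}>\tfrac13$ for every $b$, this yields the uniform bound $\Xi(3,b)>2\sqrt{2/3}+\tfrac{1}{\sqrt3}-\sqrt2$, a constant independent of $a$ and $b$.

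It then remains to verify the numerical inequality $2\sqrt{2/3}+\tfrac{1}{\sqrt3}-\sqrt2>\tfrac{\sqrt5}{2\sqrt2}$. Writing $2\sqrt{2/3}+\tfrac{1}{\sqrt3}=\tfrac{2\sqrt2+1}{\sqrt3}$ and clearing denominators, this is equivalent to $8+2\sqrt2>(4+\sqrt5)\sqrt3$; both sides are positive, so squaring reduces it to $9+32\sqrt2>24\sqrt5$, and squaring once more reduces it to $576\sqrt2>751$, which holds because $2\cdot 576^2=663552>564001=751^2$. Chaining the three steps gives $\Xi(a,b)\ge\Xi(3,b)>2\sqrt{2/3}+\tfrac1{\sqrt3}-\sqrt2>\tfrac{\sqrt5}{2\sqrt2}$ for all $a\ge 3$, $b\ge 2$.

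I expect the real difficulty to be the tightness of the estimate rather than any individual computation: the uniform lower bound obtained after the two reductions is $2\sqrt{2/3}+1/\sqrt3-\sqrt2\approx 0.7961$, only about $0.006$ above the target $\sqrt5/(2\sqrt2)\approx 0.7906$. Consequently no step may be lossy — one is essentially forced to the extremal data $a=3$ and the $b\to\infty$ regime — and the final radical inequality, although true, only becomes transparent after squaring twice. A secondary subtlety is to invoke the exact derivative-sign computation in the proof of Lemma~\ref{lem2} (which is what delivers $q(a,b)\ge q(3,b)$, the direction needed for a lower bound) rather than its one-line verbal summary.
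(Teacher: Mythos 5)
Your proposal is correct, and at the top level it uses the same decomposition as the paper: $\Xi(a,b)=f_1(a)+f_2(a,b)$ with $f_1=m$ from Lemma~\ref{lem1} and $f_2=q$ from Lemma~\ref{lem2}, bounding each piece separately. Where you genuinely diverge is in how the second piece is handled, and your version repairs two real defects in the paper's argument. First, the paper cites Lemma~\ref{lem2} to call $f_2$ \emph{decreasing} in $a$ and then asserts $f_2\ge -0.1296$; but tracking the sign of the factor $2-b$ in the derivative (as you do) shows $q$ is in fact \emph{increasing} in $a$ for $b>2$ and identically zero for $b=2$, which is precisely the direction needed to reduce to $a=3$ — the "decreasing" reading would push the bound the wrong way. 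Second, the paper's constant is not actually valid: the infimum of $f_2$ over $a\ge 3$, $b\ge 2$ is $\frac{1}{\sqrt{3}}-\frac{1}{\sqrt{2}}\approx -0.12976$, approached as $a=3$, $b\to\infty$, which lies \emph{below} $-0.1296$, so the stated inequality $f_2\ge -0.1296$ fails for large $b$ (the final conclusion survives only because $0.925886-0.129757>0.790570$). Your chain $\Xi(a,b)\ge \Xi(3,b)>2\sqrt{2/3}+\frac{1}{\sqrt{3}}-\sqrt{2}>\frac{\sqrt{5}}{2\sqrt{2}}$, verified by squaring twice down to $2\cdot 576^2>751^2$, replaces the paper's uncheckable decimals with exact arithmetic and is airtight; the price is the double squaring at the end, forced by the margin of only about $0.0056$, which you correctly identify as the real difficulty.
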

	\begin{proof}
		We begin by decomposing the function $\Xi (a,b)$ into two parts, $f_{1}(a) =(a-1) \sqrt{\frac{a-1}{a}}-(a-2) \sqrt{\frac{a-2}{a-1}}$ and $f_{2}(a,b) = \sqrt{\frac{a+b-2}{ab}}-\sqrt{\frac{a+b-3}{(a-1)b}}$. By Lemma \ref{lem1} $f_{1}(a)$ is increasing function and it satisfies $f_{1}(a)\geq 0.9258$ ($f_{1}(3)= 0.9258$). By Lemma \ref{lem2} $f_2(a,b)$ is deceasing function and it satisfies $f_{2}(a,b)\geq -0.1296$. Since $\Xi (a,b)=f_1(a,b)+f_2(a,b)$ we conclude $\Xi (a,b) \geq 0.7962 > \frac{\sqrt{5}}{2\sqrt{2}}$. Thus, it follows that $\Xi (a,b)> \frac{\sqrt{5}}{2\sqrt{2}}$. This completes the proof. 
	\end{proof}
	
	\begin{lemma}\label{lem5}
		Suppose that $p(a)=\sqrt{a-b}\sqrt{a-b-1}-\sqrt{a-b+1}\sqrt{a-b}$ with $a\geq 3$ and $b\leq \lceil\frac{2a}{3}\rceil$ is a increasing and negative function, and $-\sqrt{2}\leq p(a)<-1$.
	\end{lemma}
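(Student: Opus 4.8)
The plan is to collapse the two‑variable expression to a single real variable $s=a-b$ and study $p$ as a function of $s$. The hypotheses $a\ge 3$ and $b\le\lceil 2a/3\rceil$ give $s=a-b\ge 1$: a one‑line check on $a\bmod 3$ shows $\lceil 2a/3\rceil\le a-1$ for every integer $a\ge 3$ (with equality only for $a\in\{3,4,5\}$). In particular all radicals occurring in $p$ are defined, $\sqrt{s-1}\le\sqrt{s+1}$, and hence
$$p=\sqrt{s}\bigl(\sqrt{s-1}-\sqrt{s+1}\bigr)<0,$$
which already settles negativity. Since $s=a-b$ is strictly increasing in $a$ (with $b$ fixed), it remains to prove that $p$, viewed as a function of $s\ge 1$, is strictly increasing and satisfies $-\sqrt2\le p<-1$.

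I would rationalize the difference of square roots:
$$p(s)=\sqrt{s}\,\frac{(s-1)-(s+1)}{\sqrt{s-1}+\sqrt{s+1}}=\frac{-2\sqrt{s}}{\sqrt{s-1}+\sqrt{s+1}}=\frac{-2}{\sqrt{1-1/s}+\sqrt{1+1/s}}.$$
Set $D(s)=\sqrt{1-1/s}+\sqrt{1+1/s}$ and substitute $x=1/s\in(0,1]$; the map $x\mapsto\sqrt{1-x}+\sqrt{1+x}$ has derivative $\tfrac12\bigl(1/\sqrt{1+x}-1/\sqrt{1-x}\bigr)<0$, so it is strictly decreasing on $(0,1]$. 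As $x=1/s$ decreases when $s$ grows, $D(s)$ is strictly increasing in $s$, and therefore $p(s)=-2/D(s)$ is strictly increasing in $s$, hence in $a$.

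The bounds now fall out of the monotonicity of $D$. Its minimum on $s\ge 1$ is $D(1)=\sqrt{0}+\sqrt{2}=\sqrt2$, attained exactly at $s=1$ (that is, $a-b=1$), while $D(s)\to 1+1=2$ as $s\to\infty$, a value never reached; thus $D(s)\in[\sqrt2,2)$, which gives $p(s)=-2/D(s)\in[-\sqrt2,-1)$, i.e. $-\sqrt2\le p(a)<-1$ with equality on the left precisely when $a-b=1$.

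There is no serious obstacle in this argument; the two points that need a little care are (i) confirming $s=a-b\ge 1$ throughout the stated parameter range, so that $p$ is real‑valued and $-\sqrt2$ is the true minimum, and (ii) noting that $-1$ is a strict supremum of $p$, so the inequality $p<-1$ is strict. One could replace the rationalization step by a direct derivative computation: with $s=a-b$, $p'(s)=\frac{2s-1}{2\sqrt{s^2-s}}-\frac{2s+1}{2\sqrt{s^2+s}}$, and since both terms are nonnegative for $s\ge 1$, squaring reduces $p'(s)>0$ to $4s^4-3s^2+s>4s^4-3s^2-s$, i.e. $2s>0$. The rationalized form is preferable, though, since it exhibits the monotonicity and both numerical bounds simultaneously.
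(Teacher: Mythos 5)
Your proof is correct, and it takes a genuinely different (and tighter) route than the paper's. The paper sets $k(a)=\sqrt{a-b}\sqrt{a-b-1}$, checks $k'>0$, and then asserts that $p(a)=k(a)-k(a+1)$ is increasing and negative "because $k$ is increasing," finishing with "we verified" the numerical bounds. Negativity does follow from $k$ increasing, but monotonicity of the difference $k(a)-k(a+1)$ does not --- that would additionally require concavity of $k$ (so that the increments $k(a+1)-k(a)$ decrease), which the paper never establishes; and the endpoint bounds $-\sqrt{2}\le p<-1$ are left entirely to unexplained verification. Your substitution $s=a-b$ followed by rationalization to $p=-2/\bigl(\sqrt{1-1/s}+\sqrt{1+1/s}\bigr)$ repairs both gaps at once: monotonicity in $s$ (hence in $a$) drops out of the monotonicity of the denominator, and the exact range $[-\sqrt{2},-1)$ is read off from $D(1)=\sqrt{2}$ and $D(s)\to 2$. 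Your preliminary check that $b\le\lceil 2a/3\rceil$ forces $s=a-b\ge 1$ is also a point the paper glosses over, and it is exactly what makes $-\sqrt{2}$ the attained minimum (at $a-b=1$, i.e.\ $a\in\{3,4,5\}$) rather than just a bound. In short, your argument proves strictly more than the paper's sketch actually establishes, with no gaps I can find.
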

	\begin{proof}
		Suppose that $k(a)= \sqrt{a-b}\sqrt{a-b-1}$ and $k'(a)=\frac{2a-2b-1}{2 \sqrt{a-b-1} \sqrt{a-b}}$, so $k(a)$ is a increasing function for $a\geq 3$ and $b\leq \lceil\frac{2a}{3}\rceil$. Therefor, give function $p(a)=k(a)-k(a+1)$ is a increasing and negative function. We verified that given function hold the inequalities $-\sqrt{2}\leq p(a)<-1$.
	\end{proof}
	\begin{lemma} \label{lem6}
		Let $m(a)=\sqrt{a-b-1}\sqrt{a-b-2}-\sqrt{a-b+1}\sqrt{a-b}$ with $a\geq 4$ and $b\leq \lceil\frac{2a}{3}\rceil$ is a increasing and negative function, and $-\sqrt{6}\leq m(a)<-2$. 
	\end{lemma}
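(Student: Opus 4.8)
The statement of Lemma~\ref{lem6} is structurally parallel to Lemma~\ref{lem5}, so the plan is to reuse the same device. Setting $c=a-b$, observe that $m(a)=\sqrt{(a-b-1)(a-b-2)}-\sqrt{(a-b+1)(a-b)}$ can be written as $m(a)=k(a)-k(a+2)$, where $k(a)=\sqrt{a-b-1}\,\sqrt{a-b-2}=\sqrt{(a-b-1)(a-b-2)}$, since $k(a+2)=\sqrt{(a-b+1)(a-b)}$. So the whole lemma reduces to understanding the single-variable function $k$.

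First I would show $k$ is increasing on the admissible domain. Differentiating gives $k'(a)=\dfrac{2a-2b-3}{2\sqrt{(a-b-1)(a-b-2)}}$, which is positive exactly when $a-b\ge 2$; under $a\ge 4$, $b\le\lceil\frac{2a}{3}\rceil$, together with the implicit requirement that the radicands be nonnegative (i.e. $a-b-2\ge 0$), this holds, so $k$ is increasing. A short computation also shows $k$ is concave in $a$ (it behaves like a shifted square root, with $k''(a)<0$), which is precisely what forces $m(a)=k(a)-k(a+2)$ to be increasing: $m'(a)=k'(a)-k'(a+2)\ge 0$ because $k'$ is decreasing. Since $k$ is increasing, $k(a)<k(a+2)$, hence $m(a)<0$. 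This establishes that $m$ is an increasing, negative function.

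An alternative and cleaner route is to invoke Lemma~\ref{lem5} directly: an elementary algebraic check yields the identity $m(a)=p(a)+p(a-1)$, where $p$ is the function from Lemma~\ref{lem5}, valid for $a\ge 4$. Monotonicity and negativity of $m$ then follow at once from those of $p$, and the crude bound $-2\sqrt{2}\le m(a)<-2$ also drops out immediately; only the sharper lower bound needs additional input.

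It remains to pin down the two-sided bound $-\sqrt{6}\le m(a)<-2$, which is the only delicate part. Since $m$ is increasing in $a$ (equivalently in $c=a-b$), its infimum is attained at the smallest admissible value $c=2$, where $m=\sqrt{1\cdot 0}-\sqrt{3\cdot 2}=-\sqrt{6}$; for the upper bound, rationalizing gives $m(a)=\dfrac{-4(a-b)+2}{\sqrt{(a-b-1)(a-b-2)}+\sqrt{(a-b+1)(a-b)}}$, whose limit as $a\to\infty$ is $-2$, so monotonicity forces $m(a)<-2$ throughout. As with the other lemmas in this section, these inequalities can be confirmed with the Mathematica verification the authors describe, which completes the argument.
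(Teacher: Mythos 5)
Your main argument is the same decomposition the paper uses: write $m(a)=k(a)-k(a+2)$ with $k(a)=\sqrt{(a-b-1)(a-b-2)}$ and show $k$ is increasing. The paper stops there and simply asserts that $m$ is therefore increasing and negative, but increasingness of $k$ alone only gives negativity; your additional observation that $k''(a)<0$, so that $k'$ is decreasing and $m'(a)=k'(a)-k'(a+2)\geq 0$, is exactly the step needed to make the monotonicity claim rigorous, and the paper omits it. You also justify the two-sided bound explicitly (infimum $-\sqrt{6}$ at $a-b=2$, limit $-2$ at infinity after rationalizing), where the paper only says the inequalities were ``verified.'' Your alternative route via the telescoping identity $m(a)=p(a)+p(a-1)$ with $p$ from Lemma~\ref{lem5} is a genuinely different and cleaner device: it imports monotonicity, negativity, and the upper bound $m(a)<-2$ for free, at the cost of only yielding the weaker lower bound $-2\sqrt{2}$, which you correctly flag as needing the separate endpoint computation. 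One caveat worth keeping: as you note, the stated hypotheses $a\geq 4$, $b\leq\lceil\frac{2a}{3}\rceil$ do not by themselves force $a-b\geq 2$, so the domain restriction making the radicands nonnegative must be treated as implicit; this is a defect of the lemma's statement rather than of your proof.
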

	\begin{proof}
		Derivative of $\alpha(a)=\sqrt{a-b-1}\sqrt{a-b-2}$ is $\alpha^{'}(a)=\frac{2a-2b-3}{2\sqrt{a-b-2}\sqrt{a-b-1}}>0$. Therefore, $\alpha(a)$ is a increasing function for $a\geq 4$ and $b\leq \lceil\frac{2a}{3}\rceil$. Hence, give function $m(a)=\alpha(a)-\alpha(a+2)$ is a increasing and negative function. We verified that given function hold the inequalities $-\sqrt{6}\leq m(a)<-2$.  
	\end{proof}
	
	\begin{theorem}\textnormal{\label{th1} \cite{cockayne2004roman}}
		\textnormal{For the path graph $P_{n}$}, $RDN$ is 
		$\Gamma_{R}(\mathbb{P}_{n})=\left\lceil\frac{2n}{3}\right\rceil.$
	\end{theorem}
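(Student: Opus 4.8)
The plan is to prove matching upper and lower bounds on $\Gamma_R(\mathbb{P}_n)$, labelling the vertices $x_1,x_2,\ldots ,x_n$ consecutively along the path. Since the statement is the one attributed to Cockayne et al.\ \cite{cockayne2004roman}, I expect no deep obstacle; both halves are elementary counting.

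For the upper bound I would exhibit an explicit Roman dominating function $\aleph$ and compute its weight. Take $\aleph(x_i)=2$ whenever $i\equiv 2\pmod 3$ and $\aleph(x_i)=0$ otherwise, and then repair the right end: if $n\equiv 1\pmod 3$ the vertex $x_n$ has no neighbour of value $2$, so reassign $\aleph(x_n)=1$. A short check shows $\aleph$ is an RDF: a vertex $x_i$ with $\aleph(x_i)=0$ and $i\equiv 0\pmod 3$ has the neighbour $x_{i-1}$ with $i-1\equiv 2\pmod 3$, while one with $i\equiv 1\pmod 3$ must satisfy $i<n$ (the case $i=n$ having been repaired) and so has the neighbour $x_{i+1}$ with $i+1\equiv 2\pmod 3$; in both cases the neighbour lies in $V_2$. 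Counting the assigned labels in each residue class of $n$ modulo $3$ gives weight exactly $\lceil 2n/3\rceil$, hence $\Gamma_R(\mathbb{P}_n)\le\lceil 2n/3\rceil$.

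For the lower bound, let $\aleph$ be an arbitrary RDF on $\mathbb{P}_n$ and let $V_0,V_1,V_2$ be the sets of vertices with value $0,1,2$, so that the weight is $w=|V_1|+2|V_2|$ and $n=|V_0|+|V_1|+|V_2|$. Every vertex of $V_0$ is adjacent to a vertex of $V_2$, and every vertex of a path has degree at most $2$, so $|V_0|\le\sum_{v\in V_2}\zeta_v\le 2|V_2|$. Substituting $|V_1|=n-|V_0|-|V_2|$ gives $w=n-|V_0|+|V_2|\ge n-|V_2|$, and trivially $w\ge 2|V_2|$, whence $w\ge\max\{\,n-|V_2|,\,2|V_2|\,\}\ge \tfrac{2n}{3}$, since on either side of $|V_2|=n/3$ one of the two quantities is at least $2n/3$. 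As $w$ is an integer, $w\ge\lceil 2n/3\rceil$, and since $\aleph$ was arbitrary, $\Gamma_R(\mathbb{P}_n)\ge\lceil 2n/3\rceil$. Combining the two inequalities yields $\Gamma_R(\mathbb{P}_n)=\lceil 2n/3\rceil$.

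The only point that really needs care is the boundary bookkeeping in the construction when $n\not\equiv 0\pmod 3$: one must check that the repair uses the label $1$ (not $2$) and is placed on the correct end so that the total weight is $\lceil 2n/3\rceil$ rather than one more. If one preferred to avoid the explicit construction, the lower bound could instead be obtained by induction on $n$ with base cases $n\in\{1,2,3\}$, peeling three consecutive vertices from one end of an optimal RDF and checking their combined contribution is at least $2$; I would regard that as the fallback route, but the degree‑counting argument above is cleaner and I expect to use it.
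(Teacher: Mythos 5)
Your proposal is correct, but note that the paper does not prove this statement at all: Theorem \ref{th1} is imported verbatim from the cited reference \cite{cockayne2004roman}, so there is no in-paper argument to compare against. Your two-sided proof is sound and self-contained. The upper bound construction checks out in all three residue classes: for $n\equiv 0$ you get $2\cdot\frac{n}{3}$, for $n\equiv 2$ you get $2\bigl(\frac{n-2}{3}+1\bigr)=\lceil 2n/3\rceil$, and for $n\equiv 1$ the single repaired label $1$ on $x_n$ gives $2\cdot\frac{n-1}{3}+1=\lceil 2n/3\rceil$; your worry about placing the repair correctly is legitimate but resolves cleanly because with the pattern anchored at $i\equiv 2\pmod 3$ the only uncovered zero-vertex is $x_n$ itself when $n\equiv 1$. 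The lower bound via $|V_0|\le\sum_{v\in V_2}\zeta_v\le 2|V_2|$, hence $w\ge\max\{n-|V_2|,\,2|V_2|\}\ge\tfrac{2n}{3}$, is the standard maximum-degree argument (it in fact proves $\Gamma_R(\mathbb{G})\ge\lceil 2n/(\Delta+1)\rceil$ for any graph) and it handles the degenerate case $n=1$ correctly since a lone vertex cannot receive label $0$. If this theorem were to be proved rather than cited in the paper, your argument would be an appropriate and complete one.
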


	\section{Main Results}
	This section presents the extremal values of the $ABC$ index of trees in terms of their Roman domination number and order. We define two functions, $\mathbb{f}{\text{min}}(n, \Gamma_R)$ and $\mathbb{f}{\text{max}}(n, \Gamma_R)$, which represent the lower and upper bounds of the $ABC$ index for trees, respectively, based on the order $n$ and the $RDN$ $\Gamma_R$. The proofs of these bounds are provided in Theorems \ref{them2} and \ref{them3}. Additionally, Theorems \ref{them4} and \ref{them5} identify specific graphs that achieve these exact values.
	
	\begin{equation*}
		\begin{split}
			\mathbb{f}_{min}(n,\Gamma_R)=& \frac{1}{\sqrt{2}}(n-1)+\left\lceil\frac{2n}{3}\right\rceil\left(\frac{3}{4}-\frac{1}{\sqrt{2}}\right)+\Gamma_R\left(\frac{1}{\sqrt{2}}-\frac{3}{4}\right).\\
			\mathbb{f}_{max}(n,\Gamma_R)=& \sqrt{n-\Gamma_R+1}\sqrt{n-\Gamma_R}-\left(\Gamma_R -2\right)\left(\frac{1}{2}-\frac{3}{\sqrt{5}}\right).
		\end{split}
	\end{equation*}
	\noindent The following lemma assists in establishing the minimum value of the $ABC$ index in terms of the order and its $RDN$.

	\begin{lemma}\label{lem4}
		Suppose that $\mathbb{T}$ is a tree graph and $\Gamma_R$ is a $RDN$, a vertex $x \in V(\mathbb{T})$ such that $\zeta(x)= m \geq 3$, $N(x)=\{y_{1},y_{2},\ldots,y_{m}\}$, $\zeta(y_{m})=j\geq 2$, $\zeta(y_{a})=1$ for every $a \in \{1,2,3, \ldots, m-1\}$. If we take $T'=T-{y_{1}}$, we have: 
	\end{lemma}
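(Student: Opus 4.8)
The plan is to treat separately the two quantities that change when the pendant vertex $y_{1}$ is deleted: the $ABC$ index, through an edge-by-edge comparison that collapses to the function $\Xi$ of Lemma~\ref{lem3}, and the Roman domination number, through elementary manipulations of Roman dominating functions. I would write the conclusion as a statement about $ABC(\mathbb{T})-ABC(T')$ together with a comparison of $\Gamma_{R}(\mathbb{T})$ and $\Gamma_{R}(T')$, where $T'=\mathbb{T}-\{y_{1}\}$.

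First I would identify the edges whose contribution to \eqref{equ1} changes in passing from $\mathbb{T}$ to $T'$. Since $\mathbb{T}$ is a tree and $y_{1},\dots,y_{m-1}$ are leaves, the only affected edges are incident with $x$: the edge $xy_{1}$ disappears (it contributed $\sqrt{(m-1)/m}$ because $\zeta(y_{1})=1$); each of the $m-2$ edges $xy_{2},\dots,xy_{m-1}$ has its degree pair move from $(m,1)$ to $(m-1,1)$; and the edge $xy_{m}$ has its degree pair move from $(m,j)$ to $(m-1,j)$, the degree $j$ of $y_{m}$ being unaffected because $y_{1}\notin N(y_{m})$. Adding these contributions,
\[
ABC(\mathbb{T})-ABC(T') = (m-1)\sqrt{\tfrac{m-1}{m}}-(m-2)\sqrt{\tfrac{m-2}{m-1}}+\sqrt{\tfrac{m+j-2}{mj}}-\sqrt{\tfrac{m+j-3}{(m-1)j}},
\]
which is exactly $\Xi(m,j)$. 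As $m\ge 3$ and $j\ge 2$, Lemma~\ref{lem3} then yields $ABC(\mathbb{T})-ABC(T')=\Xi(m,j)>\tfrac{\sqrt{5}}{2\sqrt{2}}$.

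Next I would bound $\Gamma_{R}(T')$ against $\Gamma_{R}(\mathbb{T})$ from both sides. For $\Gamma_{R}(T')\le\Gamma_{R}(\mathbb{T})$, start from an optimal Roman dominating function $\aleph$ on $\mathbb{T}$; because $x$ has at least $m-1\ge 2$ pendant neighbours, one may assume after an exchange that $\aleph(x)=2$ and $\aleph(y_{a})=0$ for all $a\in\{1,\dots,m-1\}$, since otherwise every pendant neighbour of $x$ carries weight at least $1$ and moving all of that weight onto $x$ cannot raise the total while keeping a valid function. Restricting such an $\aleph$ to $T'$ merely deletes the $0$-weighted vertex $y_{1}$, so it is a Roman dominating function of $T'$ of the same weight. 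For the reverse estimate, extend an optimal Roman dominating function of $T'$ to $\mathbb{T}$ by assigning the value $1$ to $y_{1}$, which gives a Roman dominating function of $\mathbb{T}$ of weight $\Gamma_{R}(T')+1$, so $\Gamma_{R}(\mathbb{T})\le\Gamma_{R}(T')+1$. Altogether $\Gamma_{R}(\mathbb{T})-1\le\Gamma_{R}(T')\le\Gamma_{R}(\mathbb{T})$, the upper bound $\Gamma_{R}(T')\le\Gamma_{R}(\mathbb{T})$ being what feeds the minimum-value induction (together with $\Xi(m,j)>\tfrac{\sqrt5}{2\sqrt2}$ exceeding the per-step increment of $\mathbb{f}_{min}$).

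I expect the only subtle point to be the exchange step asserting $\aleph(x)=2$ without loss of generality: one has to check carefully that transferring the combined pendant weight onto $x$ never increases the total weight, and this is precisely where the hypothesis $m\ge 3$ (hence at least two leaves at $x$) is used. The $ABC$ computation itself is mechanical once the affected edges are enumerated, and its positivity is supplied by Lemma~\ref{lem3}; I would also remark that equality $\Gamma_{R}(T')=\Gamma_{R}(\mathbb{T})$ can genuinely fail, so the two-sided estimate is the correct form of the Roman-domination conclusion.
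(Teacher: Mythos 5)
Your proposal is correct and follows essentially the same route as the paper: the same edge-by-edge decomposition of $ABC(\mathbb{T})-ABC(T')$ into the quantity $\Xi(m,j)$ of Lemma~\ref{lem3}, followed by the induction via $\mathbb{f}_{min}$. The one genuine addition is your exchange argument establishing $\Gamma_{R}(T')\leq\Gamma_{R}(\mathbb{T})$ (needed because the coefficient $\tfrac{1}{\sqrt{2}}-\tfrac{3}{4}$ of $\Gamma_R$ is negative), a step the paper uses implicitly without justification; your version is the more complete one.
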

	\begin{proof}
		Since $\mathbb{T'}=\mathbb{T}-{y_{1}}$, we have $ABC(\mathbb{T})= ABC(\mathbb{T'})+ (m-1) \sqrt{\frac{m-1}{m}}+\sqrt{\frac{m+j-2}{mj}}-(m-2) \sqrt{\frac{m-2}{m-1}}-\sqrt{\frac{m+j-3}{(m-1)j}}
		\geq \frac{1}{\sqrt{2}}(n-1)-\frac{1}{\sqrt{2}}+\left\lceil\frac{2(n-1)}{3}\right\rceil\left(\frac{3}{4}-\frac{1}{\sqrt{2}}\right)+\Gamma_R\left(\frac{1}{\sqrt{2}}-\frac{3}{4}\right)+(m-1) \sqrt{\frac{m-1}{m}}+\sqrt{\frac{m+j-2}{mj}}
		-(m-2) \sqrt{\frac{m-2}{m-1}}-\sqrt{\frac{m+j-3}{(m-1)j}}
		\geq  \mathbb{f}_{min}(n,\Gamma_R)+(m-1) \sqrt{\frac{m-1}{m}}+\sqrt{\frac{m+j-2}{mj}}-(m-2) \sqrt{\frac{m-2}{m-1}}-\sqrt{\frac{m+j-3}{(m-1)j}}-\frac{1}{\sqrt{2}}-\left(\frac{3}{4}-\frac{1}{\sqrt{2}}\right).$
		Suppose that $\alpha(m,j)=(m-1) \sqrt{\frac{m-1}{m}}+\sqrt{\frac{m+j-2}{mj}}-(m-2) \sqrt{\frac{m-2}{m-1}}-\sqrt{\frac{m+j-3}{(m-1)j}}$. Using Lemma \ref{lem3}, we say $\alpha(m,j)> \frac{\sqrt{5}}{2\sqrt{2}}$. So,
		$ABC(\mathbb{T})\geq \mathbb{f}_{max}(n,\Gamma_R)+\frac{\sqrt{5}}{2\sqrt{2}}-\frac{1}{\sqrt{2}}-\left(\frac{3}{4}-\frac{1}{\sqrt{2}}\right)>\mathbb{f}_{max}(n,\Gamma_R).$
	\end{proof}
	
	\begin{theorem} \label{them2}
		Suppose that $\mathbb{T}$ be a tree graph and let $\Gamma_R$ denote its $RDN$. Then the $ABC$ index of $\mathbb{T}$ satisfies the inequality $ABC(\mathbb{T}) \geq \mathbb{f}_{min}(n, \Gamma_R)$.
	\end{theorem}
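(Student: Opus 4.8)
The plan is to argue by strong induction on the order $n\ge 3$ of $\mathbb{T}$ (the cases $n\le 2$ are degenerate and excluded). For the base of the induction I would verify the inequality directly on the small trees; the guiding observation is that an edge $xy$ contributes exactly $\tfrac1{\sqrt 2}$ to the $ABC$ index precisely when $(\zeta_x-2)(\zeta_y-2)=0$, so for $n\ge 3$ every edge of $\mathbb{P}_n$ contributes $\tfrac1{\sqrt 2}$, giving $ABC(\mathbb{P}_n)=\tfrac{n-1}{\sqrt 2}$, while $\Gamma_R(\mathbb{P}_n)=\lceil\tfrac{2n}{3}\rceil$ by Theorem \ref{th1}. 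Since the coefficient of $\lceil\tfrac{2n}{3}\rceil$ in $\mathbb{f}_{min}$ is the negative of the coefficient of $\Gamma_R$, the two parameter terms cancel, $\mathbb{f}_{min}(n,\lceil\tfrac{2n}{3}\rceil)=\tfrac{n-1}{\sqrt 2}$, and the path attains equality. For the inductive step I may assume $\mathbb{T}$ is not a path, fix a diametral path $x_1x_2\cdots x_{d+1}$ with $x_1$ a leaf, and let $x_s$ be the first vertex of $x_1,x_2,\dots$ with $\zeta(x_s)\ge 3$; diametrality forces $x_2,\dots,x_{s-1}$ to have degree $2$ and every neighbour of $x_2$ other than $x_3$ to be a leaf.

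If $s=2$, then $x_2$, its $\zeta(x_2)-1\ge 2$ pendant neighbours, and its remaining neighbour $x_3$ (which has degree $\ge 2$ unless $\mathbb{T}$ is a star, a base case) form exactly the configuration of Lemma \ref{lem4}; deleting one pendant neighbour of $x_2$ gives a tree $\mathbb{T}'$ on $n-1$ vertices, and since removing a leaf never increases the Roman domination number and $\mathbb{f}_{min}$ is decreasing in its second argument, the inductive hypothesis yields $ABC(\mathbb{T}')\ge\mathbb{f}_{min}(n-1,\Gamma_R)$; combining this with $ABC(\mathbb{T})=ABC(\mathbb{T}')+\alpha(\zeta(x_2),\zeta(x_3))$ and the bound $\alpha>\tfrac{\sqrt 5}{2\sqrt 2}$ from Lemma \ref{lem3} (which comfortably exceeds $\mathbb{f}_{min}(n,\Gamma_R)-\mathbb{f}_{min}(n-1,\Gamma_R)\le\tfrac34$) gives $ABC(\mathbb{T})>\mathbb{f}_{min}(n,\Gamma_R)$, exactly as in the proof of Lemma \ref{lem4}. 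If $s\ge 3$, there is a pendant path $x_1-\cdots-x_{s-1}-x_s$ with degree-$2$ internal vertices, and I would delete a short segment from its tip --- a pendant $\mathbb{P}_2$ or $\mathbb{P}_3$, the choice depending on $s$ and, for small $s$, on $\zeta(x_s)$ and on whether $x_s$ carries pendant leaves --- chosen so that each newly created leaf stays adjacent to a degree-$2$ vertex whenever possible. The decisive bookkeeping fact is that $\mathbb{f}_{min}(n,\Gamma_R)-\mathbb{f}_{min}(n',\Gamma_R')$ equals $\tfrac{n-n'}{\sqrt 2}$ plus $(\tfrac34-\tfrac1{\sqrt 2})$ times the change in the ``gap'' $\lceil\tfrac{2n}{3}\rceil-\Gamma_R$, so whenever the deletion lowers $\lceil\tfrac{2n}{3}\rceil$ and $\Gamma_R$ by the same amount the target collapses to $ABC(\mathbb{T})-ABC(\mathbb{T}')\ge\tfrac{n-n'}{\sqrt 2}$; as each deleted tip edge has a degree-$2$ endpoint and hence contributes $\tfrac1{\sqrt 2}$, this holds provided the edges whose weight shifts at the attachment vertex do not lose total weight, and controlling that shift --- an edge to a vertex of degree $\ge 2$ never loses weight when an incident degree drops, while the loss along an edge to a leaf is bounded as in Lemmas \ref{lem1}--\ref{lem3} --- is the technical heart of this case.

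The step I expect to be the main obstacle is twofold. First, one must pin down how $\Gamma_R$ moves under each deletion: that removing a pendant $\mathbb{P}_3$ decreases it by exactly $2$, that removing a pendant $\mathbb{P}_2$ decreases it by $1$ or $2$, and that removing a leaf from a vertex carrying at least two leaves does not increase it. These facts are plausible ($\Gamma_R(\mathbb{P}_3)=2$, and a $2$ placed on the centre of a pendant $\mathbb{P}_3$ also guards the attachment vertex), but each requires an exchange argument showing that some minimum Roman dominating function of $\mathbb{T}$ may be taken to assign the value $2$ to the vertex adjacent to $x_1$, so that its restriction --- after a bounded correction at the attachment vertex --- is a Roman dominating function of $\mathbb{T}'$, and conversely. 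Second, the case $s\ge 3$ must be split into a handful of subcases according to the length of the pendant path and the local structure at $x_s$ (and occasionally the residue of $n$ modulo $3$), because a naive tip deletion can leave a new leaf adjacent to a high-degree vertex and thus produce a slightly negative net weight shift; in those subcases one switches to a deletion of a different length (or uses the slack already carried by the inductive hypothesis, e.g. the surplus from Lemma \ref{lem3}) and checks the resulting one- or two-variable inequality against the preliminary lemmas. Once every subcase is discharged, one obtains $ABC(\mathbb{T})\ge\mathbb{f}_{min}(n,\Gamma_R)$ for all trees of order $n\ge 3$, with equality propagating down the recursion precisely to the paths.
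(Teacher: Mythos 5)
Your overall strategy is exactly the paper's: induction on the order, equality pinned down on $\mathbb{P}_n$ via $\Gamma_R(\mathbb{P}_n)=\lceil\tfrac{2n}{3}\rceil$ so that the two parameter terms in $\mathbb{f}_{min}$ cancel, and an inductive step that deletes a few vertices near the tip of a diametral path, tracks the resulting drop in $\Gamma_R$, and offsets the change in $\mathbb{f}_{min}$ against the change in $ABC$ using Lemmas \ref{lem1}--\ref{lem3}. Your $s=2$ case is a faithful reconstruction of Lemma \ref{lem4} (the paper's Case 1), including the correct comparison of the surplus $\tfrac{\sqrt 5}{2\sqrt 2}$ against the increment $\mathbb{f}_{min}(n,\Gamma_R)-\mathbb{f}_{min}(n-1,\Gamma_R)\le\tfrac34$, and your handling of the leaf removal (monotonicity of $\mathbb{f}_{min}$ in $\Gamma_R$ plus the fact that deleting a leaf from a vertex supporting several leaves cannot increase $\Gamma_R$) is, if anything, stated more carefully than in the paper.

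The gap is that for $s\ge 3$ you describe what a proof would have to do rather than doing it. The two items you yourself flag as ``the main obstacle'' are precisely the content of the paper's Cases 2.1--2.3: the paper commits to specific deletions --- $\{x_1,x_2,w_1\}$ with $\Gamma_R$ dropping by $1$ when $\zeta(x_3)=m\ge 4$ and $x_3$ carries a leaf $w_1$; $\{x_1,x_2\}$ with a drop of $1$ when $\zeta(x_3)=3$ and $\zeta(x_4)\le 4$; $\{x_1,x_2,x_3,x_4,y_1\}$ with a drop of $3$ when $\zeta(x_3)=3$ and $\zeta(x_4)\ge 5$; and $\{x_1,x_2,x_3,y_1\}$ with a drop of $2$ when $\zeta(x_3)=2$ --- and then verifies in each case a concrete one- or two-variable inequality ($\alpha(m,k)>0$, $\beta(k)>0$, $\alpha(k)+\beta(k,u)+\tfrac{2}{\sqrt 2}+\sqrt{\tfrac23}-\tfrac{5}{\sqrt 2}>0$, etc.). Your proposal neither fixes the subcase boundaries nor exhibits the exchange arguments establishing the claimed $\Gamma_R$ increments, nor reduces the remaining estimates to checkable inequalities; you state only that this ``can be done'' after splitting into ``a handful of subcases.'' Since the whole difficulty of the theorem lives in exactly those verifications, the proposal as written is a correct plan that coincides with the paper's route but is not yet a proof; to complete it you would need to supply, for each deletion, the Roman-domination exchange argument and the explicit numeric inequality, as the paper does.
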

	\begin{proof}
		Let us demonstrate the outcome using induction regarding the vertex count. $T$ is either a star $S_{4}$ or a path $P_{4}$. As we have already observed, $ABC(P_{4})=\mathbb{f}_{min}(4, 3)$, and $ABC(S_{4})=2.44>\mathbb{f}_{min}(4, 3)=2.1213$. We examine a $\mathbb{T}$ with order $n$ and $\Gamma_R$. We consider that the inequality holds for every $\mathbb{T}$ with $n-1$ vertices. Now we prove that for when $\mathbb{T}$ has $n$ vertices. We discuss some cases.
		
		\noindent \textbf{Case 1:} We consider that $x-y-z$ is a path in $\mathbb{T}$. If $x$ is a leaf, $\zeta(y)= m\geq 3$ and $\zeta(z) = j\geq2$, then we apply Lemma \ref{lem4} and get the result.
		
		\noindent \textbf{Case 2:} Let $P_{d+1}=\{x_{1}, x_{2}, \ldots, x_{d+1}\}$ is a diametral path of the tree graph. Now, Let degree of $x_{2}$ is 2. \\
		\noindent \textbf{Case 2.1:} Suppose that $d(x_{3})=m\geq 4$, $N(x_{3})=\{x_{2},x_{4},w_{1},\ldots,w_{m-2}\}$, $d(w_{\alpha})=b_{\alpha}\leq 2$, $\alpha=2,\ldots,m-2$, $d(w_{1})=1$ and $d(x_{4})=k$ where $3\leq k \leq m$. If $\mathbb{T}=\mathbb{T}_{1}-\{x_{1},x_{2},w_{1}\}$, then $\Gamma_R(\mathbb{T})= \Gamma_R(\mathbb{T}_{1})+1$, we get:
		$ABC(\mathbb{T})=  ABC(\mathbb{T}_{1})+\sum_{\alpha=1}^{m-3}\left(\frac{1}{\sqrt{b_{\alpha}}}\right)\left(\sqrt{\frac{m+b_{\alpha}-2}{m}}-\sqrt{\frac{m+b_{\alpha}-4}{m-2}}\right)+\frac{1}{\sqrt{k}}\left(\sqrt{\frac{m+k-2}{m}}-\sqrt{\frac{m+k-4}{m-2}}\right)+\sqrt{\frac{m-1}{m}}+\sqrt{2}\geq \frac{1}{\sqrt{2}}(n-1)-\frac{3}{\sqrt{2}}+\left\lceil\frac{2(n-3)}{3}\right\rceil\left(\frac{3}{4}-\frac{1}{\sqrt{2}}\right)+\Gamma_R\left(\frac{1}{\sqrt{2}}-\frac{3}{4}\right)-\left(\frac{1}{\sqrt{2}}-\frac{3}{4}\right)+\sum_{\alpha=1}^{m-3}\left(\frac{1}{\sqrt{b_{\alpha}}}\right)\left(\sqrt{\frac{m+b_{\alpha}-2}{m}}-\sqrt{\frac{m+b_{\alpha}-3}{m-1}}\right)+\frac{1}{\sqrt{k}}\left(\sqrt{\frac{m+k-2}{m}}-\sqrt{\frac{m+k-3}{m-1}}\right)+\sqrt{\frac{m-1}{m}}+\sqrt{2}\geq  \mathbb{f}_{min}(n, \Gamma_R)+\sum_{\alpha=1}^{m-3}\left(\frac{1}{\sqrt{b_{\alpha}}}\right)\left(\sqrt{\frac{m+b_{\alpha}-2}{m}}-\sqrt{\frac{m+b_{\alpha}-4}{m-2}}\right)+\frac{1}{\sqrt{k}}(\sqrt{\frac{m+k-2}{m}}-\sqrt{\frac{m+k-4}{m-2}})+\sqrt{\frac{m-1}{m}}-\left(\frac{3}{4}-\frac{1}{\sqrt{2}}\right)-\frac{3}{\sqrt{2}}+\sqrt{2}\geq  \mathbb{f}_{min}(n, \Gamma_R)+\left(\frac{m-3}{\sqrt{3}}\right)\left(\sqrt{\frac{m+1}{m}}-\sqrt{\frac{m-1}{m-2}}\right)+\frac{1}{\sqrt{k}}\left(\sqrt{\frac{m+k-2}{m}}-\sqrt{\frac{m+k-4}{m-2}}\right)+\sqrt{\frac{m-1}{m}}-\frac{3}{4}$.
		Suppose that $\alpha(m,k) = \left(\frac{m-3}{\sqrt{3}}\right)\left(\sqrt{\frac{m+1}{m}}-\sqrt{\frac{m-1}{m-2}}\right)+\frac{1}{\sqrt{k}}\left(\sqrt{\frac{m+k-2}{m}}-\sqrt{\frac{m+k-4}{m-2}}\right)+\sqrt{\frac{m-1}{m}}-\frac{3}{4}$. $\alpha(m,k)>0$ for $m \geq 4$ and $3\leq k \leq m$. So, $ABC(\mathbb{T})\geq \mathbb{f}_{min}(n, \Gamma_R) + \alpha(m,k)> \mathbb{f}_{min}(n, \Gamma_R)$.\\
		
		\noindent \textbf{Case 2.2:} We assume $\zeta(x_{3})=3$, $N(x_{3})=\{x_{2},x_{4},y_{1}\}$, $y_{1}$ is the leaf of the $\mathbb{T}$ and $\zeta(x_{4})=k$. \\
		\noindent \textbf{Case 2.2.1:} We suppose that $1\leq k \leq 4$. If we take $\mathbb{T}_{2}=\mathbb{T}-\{x_{1},x_{2}\}$, then $\Gamma_R(\mathbb{T})= \Gamma_R(\mathbb{T}_{2})+1$, we get:\\
		\noindent	$ABC(\mathbb{T})=  ABC(\mathbb{T}_{1}) + \sqrt{\frac{k+1}{3k}}+\sqrt{\frac{2}{3}} 
		\geq \frac{1}{\sqrt{2}}(n-1)-\frac{2}{\sqrt{2}}+\left\lceil\frac{2(n-2)}{3}\right\rceil\left(\frac{3}{4}-\frac{1}{\sqrt{2}}\right)+(\Gamma_R-1)\left(\frac{1}{\sqrt{2}}-\frac{3}{4}\right) + \sqrt{\frac{k+1}{3k}}+\sqrt{\frac{2}{3}}
		\geq  \mathbb{f}_{min}(n, \Gamma_R) +\sqrt{\frac{k+1}{3k}}+\sqrt{\frac{2}{3}}-\sqrt{2}-\left(\frac{3}{4}-\frac{1}{\sqrt{2}}\right)$.
		Suppose that  $\beta(k)=\sqrt{\frac{k+1}{3k}}+\sqrt{\frac{2}{3}}-\sqrt{2}-\left(\frac{3}{4}-\frac{1}{\sqrt{2}}\right)$. $\beta(k)>0$ for $1\leq k \leq 4$. Therefore, $ABC(\mathbb{T})\geq \mathbb{f}_{min}(n, \Gamma_R)+\beta(k)>\mathbb{f}_{min}(n, \Gamma_R)$.\\
		\noindent \textbf{Case 2.2.2:} We suppose that $k \geq 5$,  $\zeta(x_{5})= u $, $N(x_{4})=\{x_{3}, x_{5}, y_{1},\ldots,y_{k-2}\}$, $\zeta(y_{1})=1$ and $\zeta(y_{a})=b_{a}\leq 5$ where $a=\{2,3,\ldots,k-2\}$. If we take $\mathbb{T}_{4}=\mathbb{T}-\{x_{1},x_{2},x_{3},x_{4},y_{1}\}$, then $\Gamma_R(\mathbb{T})= \Gamma_R(\mathbb{T}_{4})+3$, we get:\\
		$ABC(\mathbb{T})=  ABC(\mathbb{T}_{4})+ \sum_{a=1}^{u-2} \sqrt{\frac{k+b_{a}-2}{kb_{a}}}-\sum_{a=1}^{u-3} \sqrt{\frac{k+b_{a}-3}{\left(k-1\right)b_{a}}}+\sqrt{\frac{k-1}{k}}+\sqrt{\frac{k+1}{3k}}+\sqrt{\frac{k+u-2}{ku}}-\sqrt{\frac{k+u-3}{(k-1)u}}+\frac{2}{\sqrt{2}}+\sqrt{\frac{2}{3}}
		\geq \frac{1}{\sqrt{2}}(n-1)-\frac{5}{\sqrt{2}}+\left\lceil\frac{2(n-5)}{3}\right\rceil\left(\frac{3}{4}-\frac{1}{\sqrt{2}}\right)+(\Gamma_R-3)\left(\frac{1}{\sqrt{2}}-\frac{3}{4}\right)+\sum_{a=1}^{u-2} \sqrt{\frac{k+b_{a}-2}{kb_{a}}}-\sum_{a=1}^{u-3} \sqrt{\frac{k+b_{a}-3}{\left(k-1\right)b_{a}}}
		+\sqrt{\frac{k+u-2}{ku}}-\sqrt{\frac{k+u-3}{(k-1)u}}+\sqrt{\frac{k-1}{k}}+\sqrt{\frac{k+1}{3k}}+\frac{2}{\sqrt{2}}+\sqrt{\frac{2}{3}}
		\geq  \mathbb{f}_{min}(n, \Gamma_R)+ \sum_{a=1}^{u-2} \sqrt{\frac{k+b_{a}-2}{kb_{a}}}-\sum_{a=1}^{u-3} \sqrt{\frac{k+b_{a}-3}{\left(k-1\right)b_{a}}}+\sqrt{\frac{k-1}{k}}+\sqrt{\frac{k+1}{3k}}+\sqrt{\frac{k+u-2}{ku}}-\sqrt{\frac{k+u-3}{(k-1)u}}
		+\frac{2}{\sqrt{2}}+\sqrt{\frac{2}{3}}-\frac{5}{\sqrt{2}}
		\geq  \mathbb{f}_{min}(n, \Gamma_R)+ (k-2) \sqrt{\frac{k+3}{5k}}-(k-3) \sqrt{\frac{k+2}{5\left(k-1\right)}}+\sqrt{\frac{k-1}{k}}+\sqrt{\frac{k+1}{3k}}+\sqrt{\frac{k+u-2}{ku}}-\sqrt{\frac{k+u-3}{(k-1)u}}
		+\frac{2}{\sqrt{2}}+\sqrt{\frac{2}{3}}-\frac{5}{\sqrt{2}}$\\
		Suppose that $\alpha(k)=(k-2) \sqrt{\frac{k+3}{5k}}-(k-3) \sqrt{\frac{k+2}{5\left(k-1\right)}}+\sqrt{\frac{k-1}{k}}+\sqrt{\frac{k+1}{3k}}$. It has been verified that  $\alpha(k)>1.57$. Now, Let  $\beta(k,u)=\sqrt{\frac{k+u-2}{ku}}-\sqrt{\frac{k+u-3}{(k-1)u}}$. By using Lemma \ref{lem2}, we have $\beta(k,u)>-0.013$. Therefore, $ABC(\mathbb{T})\geq \mathbb{f}_{min}(n, \Gamma_R)+\alpha(k)+\beta(k,u)+\frac{2}{\sqrt{2}}+\sqrt{\frac{2}{3}}-\frac{5}{\sqrt{2}}>\mathbb{f}_{min}(n, \Gamma_R)$.\\
		
		\noindent \textbf{Case 2.3:} Assume that $\zeta(x_3) = 2$, $\zeta(x_4) = l\geq 2$, and $N(x_4) = \{x_3, x_5, y_1, \ldots, y_{l-2}\}$, where $\zeta(y_i) = a_i \leq 5$ for $i \in \{1, 2, \ldots, l-2\}$ and $\zeta(x_5) = t\leq l$. If we take $\mathbb{T}_{5}=\mathbb{T}-\{x_{1},x_{2},x_{3},y_{1}\}$, then $\Gamma_R(\mathbb{T})= \Gamma_R(\mathbb{T}_{4})+2$, we get:\\
		$ABC(\mathbb{T})=  ABC(\mathbb{T}_{5})+ \sum_{i=1}^{l-3} \frac{1}{\sqrt{a_{i}}}\left(\sqrt{\frac{l+a_{i}-2}{l}}-\sqrt{\frac{l+a_{i}-3}{l-1}}\right)+\sqrt{\frac{l-1}{l}}+\frac{1}{\sqrt{k}}\left(\sqrt{\frac{l+k-2}{l}}-\sqrt{\frac{l+k-3}{l-1}}\right)+\frac{3}{\sqrt{2}}
		\geq  \frac{1}{\sqrt{2}}(n-1)-\frac{4}{\sqrt{2}}+\left\lceil\frac{2(n-4)}{3}\right\rceil\left(\frac{3}{4}-\frac{1}{\sqrt{2}}\right)+(\Gamma_R-2)\left(\frac{1}{\sqrt{2}}-\frac{3}{4}\right)+\sum_{i=1}^{l-3} \frac{1}{\sqrt{a_{i}}}\left(\sqrt{\frac{l+a_{i}-2}{l}}-\sqrt{\frac{l+a_{i}-3}{l-1}}\right)
		+\sqrt{\frac{l-1}{l}}+\frac{1}{\sqrt{k}}\left(\sqrt{\frac{l+k-2}{l}}-\sqrt{\frac{l+k-3}{l-1}}\right)+\frac{3}{\sqrt{2}}
		\geq  \mathbb{f}_{min}(n, \Gamma_R)+\sum_{i=1}^{l-3} \frac{1}{\sqrt{a_{i}}}\left(\sqrt{\frac{l+a_{i}-2}{l}}-\sqrt{\frac{l+a_{i}-3}{l-1}}\right)+\sqrt{\frac{l-1}{l}}+\frac{1}{\sqrt{k}}\left(\sqrt{\frac{l+k-2}{l}}-\sqrt{\frac{l+k-3}{l-1}}\right)-\frac{1}{\sqrt{2}}
		-\left(\frac{3}{4}-\frac{1}{\sqrt{2}}\right)
		>  \mathbb{f}_{min}(n, \Gamma_R)+\frac{l-3}{\sqrt{5}}\left(\sqrt{\frac{l+3}{l}}-\sqrt{\frac{l+2}{l-1}}\right)+\sqrt{1-\frac{1}{l}}+\frac{1}{\sqrt{k}}\left(\sqrt{\frac{l+k-2}{l}}-\sqrt{\frac{l+k-3}{l-1}}\right)-\frac{3}{4}$.\\
		Let $\alpha(l)=\frac{l-3}{\sqrt{5}}\left(\sqrt{\frac{l+3}{l}}-\sqrt{\frac{l+2}{l-1}}\right)+\sqrt{1-\frac{1}{l}}$. Since $\alpha(l)$ is an increasing function for every $l\geq 3$, we define 
		$\beta(l,k)= \frac{1}{\sqrt{k}} \left(\sqrt{\frac{l+k-2}{l}}-\sqrt{\frac{l+k-3}{l-1}}\right)$. By Lemma \ref{lem2} $\beta(l,k)$ is a decreasing function. Therefore, the inequality holds:  $ABC(\mathbb{T})\geq \mathbb{f}_{min}(n, \Gamma_R)+\alpha(l)+\beta(l,k)-\frac{3}{4}>\mathbb{f}_{min}(n, \Gamma_R)$.
	\end{proof}

	\begin{theorem} \label{them3}
		If $\mathbb{T}$ be a tree graph and let $\Gamma_R$ denote its $RDN$. Then the ABC index of $\mathbb{T}$ satisfies the inequality $ABC(\mathbb{T}) \leq \mathbb{f}_{max}(n, \Gamma_R)$.
	\end{theorem}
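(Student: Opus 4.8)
The plan is to mirror the structure of Theorem \ref{them2}: induct on the number of vertices $n$, with the base case handled by checking $\mathbb{T}\in\{\mathbb{P}_4,\mathbb{S}_4\}$ directly against $\mathbb{f}_{max}(4,3)$. For the inductive step I would work along a diametral path $P_{d+1}=\{x_1,\dots,x_{d+1}\}$ and classify cases by the local structure near the leaf end $x_1$, exactly the strategy used for the lower bound. The key qualitative difference is that the extremal tree for the \emph{upper} bound is not a path-like object but (roughly) a spider/broom: the form of $\mathbb{f}_{max}(n,\Gamma_R)=\sqrt{n-\Gamma_R+1}\sqrt{n-\Gamma_R}-(\Gamma_R-2)\left(\tfrac12-\tfrac{3}{\sqrt5}\right)$ suggests the extremal graph has one high-degree vertex of degree about $n-\Gamma_R$ together with a small number of pendant paths of length two contributing the $(\Gamma_R-2)$ term. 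So the reduction at each step should be: delete a suitable small piece (a pendant vertex, or a pendant $P_2$, or a pendant $P_3$ attached at a vertex of degree $2$), obtaining $\mathbb{T}'$ with fewer vertices and a controlled change in $\Gamma_R$, apply the inductive hypothesis $ABC(\mathbb{T}')\le \mathbb{f}_{max}(n',\Gamma_R')$, and then show the $ABC$-increment plus the change $\mathbb{f}_{max}(n,\Gamma_R)-\mathbb{f}_{max}(n',\Gamma_R')$ has the correct sign.

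Concretely I would split into the same kinds of cases as before. If there is a path $x-y-z$ with $x$ a leaf, $\zeta(y)=m\ge 3$ and $\zeta(z)=j\ge 2$, then Lemma \ref{lem4} already gives $ABC(\mathbb{T})>\mathbb{f}_{max}(n,\Gamma_R)$ — wait, that inequality goes the wrong way, so in fact this configuration must be \emph{excluded} from an extremal tree, i.e. one shows that whenever such a configuration exists the bound is strict, and the remaining (tight) cases are those where no leaf has a neighbour of degree $\ge 3$ that itself has another non-leaf neighbour. More carefully: I would argue that if $x_2$ has degree $\ge 3$ we can prune leaves at $x_2$ and use Lemma \ref{lem1}/\ref{lem3}-type estimates to get strict inequality; so assume $\zeta(x_2)=2$. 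Then, depending on $\zeta(x_3)$, delete the pendant path $\{x_1,x_2\}$ (changing $\Gamma_R$ by $1$) or $\{x_1,x_2,x_3\}$ (changing $\Gamma_R$ by $2$), and in each subcase reduce the required inequality to a single- or double-variable inequality in the degrees $\zeta(x_3),\zeta(x_4),\dots$, to be verified with the auxiliary Lemmas \ref{lem5} and \ref{lem6} (which precisely control increments of the form $\sqrt{a-b}\sqrt{a-b-1}-\sqrt{a-b\pm1}\sqrt{a-b}$, matching the $\sqrt{n-\Gamma_R+1}\sqrt{n-\Gamma_R}$ head term of $\mathbb{f}_{max}$) together with the numerical bound $\tfrac12-\tfrac{3}{\sqrt5}$ appearing in the tail term. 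Throughout, I would use Theorem \ref{th1} to control how $\Gamma_R$ behaves, and the fact that for any tree $\Gamma_R\le\Gamma_R(\mathbb{P}_n)=\lceil 2n/3\rceil$ is compatible with the head term being real (i.e. $n-\Gamma_R+1\ge 1$).

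The main obstacle will be bounding the $ABC$-increment when the deleted piece is attached to a vertex $x_4$ (or $x_3$) of large degree $k$ whose other neighbours have arbitrary degrees $b_a\le$ const: one gets a sum $\sum_a \tfrac{1}{\sqrt{b_a}}\bigl(\sqrt{\tfrac{k+b_a-2}{k}}-\sqrt{\tfrac{k+b_a-3}{k-1}}\bigr)$ plus a term for $\zeta(x_5)$, and these must be estimated uniformly in $k$ and the $b_a$'s. As in Theorem \ref{them2} I expect the worst case is $b_a$ at its extreme (here the \emph{largest} admissible value rather than $b_a=3$ as in the minimum case, since we now want an upper rather than lower bound), reducing each sum to a one-variable function of $k$ that is monotone and can be pinned down numerically; Lemma \ref{lem2} handles the $x_5$-term. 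The bookkeeping of how $\mathbb{f}_{max}(n,\Gamma_R)-\mathbb{f}_{max}(n-3,\Gamma_R-2)$ decomposes — namely $\sqrt{n-\Gamma_R+1}\sqrt{n-\Gamma_R}-\sqrt{n-\Gamma_R}\sqrt{n-\Gamma_R-1}$ for the head (exactly the quantity $-p(a)$ of Lemma \ref{lem5}) plus a constant from the tail — is the step that ties everything together, and getting the constants to come out with the right sign in every subcase is where the real care is needed.
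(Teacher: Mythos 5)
Your plan follows the paper's proof almost exactly: induction on $n$ with base case $\{\mathbb{P}_4,\mathbb{S}_4\}$, a case split on the local structure at the leaf end of a diametral path, deletion of a small pendant piece, and control of the change in the head term $\sqrt{n-\Gamma_R+1}\sqrt{n-\Gamma_R}$ via Lemmas \ref{lem5} and \ref{lem6} together with Lemma \ref{lem2} for the degree-dependent increments. You also correctly recognize that Lemma \ref{lem4} points the wrong way here and that the configuration it treats must instead be shown to give \emph{strict} inequality for the upper bound. So the route is the right one.

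The gap is that you never pin down the one estimate on which the whole argument turns. In the paper's Case 1 (a vertex $x$ of degree $m\ge 3$ with $m-1$ leaf neighbours and one neighbour of degree $j\ge 2$), one deletes a single leaf, observes that $\Gamma_R$ is \emph{unchanged}, and must then show that the resulting $ABC$-increment
\[
\beta(m,j)=\frac{1}{\sqrt{m}}\left((m-1)^{\frac{3}{2}}+\sqrt{\tfrac{m+j-2}{j}}\right)-\frac{1}{\sqrt{m-1}}\left((m-2)^{\frac{3}{2}}+\sqrt{\tfrac{m+j-3}{j}}\right)
\]
is strictly less than $1$, while Lemma \ref{lem5} guarantees the head term drops by at least $1$. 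This is the only place in the proof where the two competing quantities are genuinely close (both tend to $1$ as $m\to\infty$, which is exactly why the star is the equality case), and it is not covered by any of Lemmas \ref{lem1}--\ref{lem6}: Lemma \ref{lem3} gives only a \emph{lower} bound $\beta(m,j)>\tfrac{\sqrt{5}}{2\sqrt{2}}$, so the needed upper bound $\beta(m,j)<1$ has to be argued separately. Your sketch waves at this case with ``prune leaves at $x_2$ and use Lemma \ref{lem1}/\ref{lem3}-type estimates,'' but those lemmas face the wrong direction, and without the $<1$ bound the induction does not close. Secondarily, your guess that the worst case of the $b_a$-sums occurs at the largest admissible $b_a$ needs care: the sign of each summand $\sqrt{\tfrac{m+b_a-2}{m}}-\sqrt{\tfrac{m+b_a-3}{m-1}}$ flips at $b_a=2$, so a uniform substitution of one extreme value is not automatically a valid upper bound for every term.
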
		
	
	\begin{proof}
		Let us demonstrate the outcome using induction regarding the vertex count. $\mathbb{T}$ is either a star $S_{4}$ or a path $P_{4}$. As we have already observed, $ABC(P_{4})=2.121<\mathbb{f}_{max}(4, 3)$, and $ABC(S_{4})=  \mathbb{f}_{max}(4, 3)=2.44$. We examine a $\mathbb{T}$ with order $n$ and $RDN$ $\Gamma_R$. We consider that the inequality holds for every $\mathbb{T}$ with $n-1$ vertices. Now we prove that for when $\mathbb{T}$ has $n$ vertices. We discuss some cases.
		
		\noindent \textbf{Case 1:} Consider a vertex $x$ where $\zeta(x)=m\geq 3$, $N(x)=\{y_{1},y_{2},\ldots,y_{m}\}$, $\zeta(y_{m})=j\geq 2$, $\zeta(y_{a})=1$ for every $a \in \{1,2,3, \ldots, m-1\}$. If we take $T'=T-{y_{1}}$, then $\Gamma_R(\mathbb{T})=\Gamma_R(\mathbb{T'})$. We have:\\
		$ABC(\mathbb{T})= ABC(\mathbb{T'})+ \frac{1}{\sqrt{m}} \left((m-1)^{\frac{3}{2}}+\sqrt{\frac{m+j-2}{j}}\right)-\frac{1}{m-1} \left((m-2)^{\frac{3}{2}}+\sqrt{\frac{m+j-3}{j}}\right)
		\leq  \sqrt{n-\Gamma_R}\sqrt{n-\Gamma_R-1}-\left(\Gamma_R -2\right)\left(\frac{1}{2}-\frac{3}{\sqrt{5}}\right)+ \frac{1}{\sqrt{m}} \left((m-1)^{\frac{3}{2}}+\sqrt{\frac{m+j-2}{j}}\right)-\frac{1}{\sqrt{m-1}} \left((m-2)^{\frac{3}{2}}+\sqrt{\frac{m+j-3}{j}}\right)\leq  \mathbb{f}_{max}(n,\Gamma_R)+\sqrt{n-\Gamma_R}\\ \sqrt{n-\Gamma_R-1}-\sqrt{n-\Gamma_R+1}\sqrt{n-\Gamma_R}+ \frac{1}{\sqrt{m}} \left((m-1)^{\frac{3}{2}}+\sqrt{\frac{m+j-2}{j}}\right)-\frac{1}{\sqrt{m-1}} \left((m-2)^{\frac{3}{2}}+\sqrt{\frac{m+j-3}{j}}\right)$.
		We assume that $\alpha(n)=\sqrt{n-\Gamma_R}\sqrt{n-\Gamma_R-1}-\sqrt{n-\Gamma_R+1}\sqrt{n-\Gamma_R}$ and $\beta(m,j)=\frac{1}{\sqrt{m}} \left((m-1)^{\frac{3}{2}}+\sqrt{\frac{m+j-2}{j}}\right)-\frac{1}{\sqrt{m-1}} \left((m-2)^{\frac{3}{2}}+\sqrt{\frac{m+j-3}{j}}\right)$. By using Lemma \ref{lem5}, we get $-\sqrt{2}\leq\alpha(n)<-1$ for $n\geq 3$ and $2\leq\Gamma_R\leq \lceil\frac{2n}{3}\rceil$ and by Lemma \ref{lem3}, we get $\frac{\sqrt{5}}{2\sqrt{2}}\leq \beta(m,j)<1$ for any $m\geq 3$ and $j\geq 2$. Therefore, $\eta(n,m,j)=\alpha(n)+\beta(m,j)$ is negative function. So, $	ABC(\mathbb{T})\leq \mathbb{f}_{max}(n,\Gamma_R)+\eta(n,m,j)<\mathbb{f}_{max}(n,\Gamma_R)$.\\
		
		\noindent \textbf{Case 2:} Let $P_{d+1}=\{x_{1},x_{2},\ldots,x_{d+1}\}$ is a diametral path of the tree. Let $\zeta(x_{2})=2$, $\zeta(x_{3})=m$, $N(x_{3})=\{x_{2},x_{4},y_{1},\ldots,y_{m-2}\}$, $\zeta(y_{i})=b_{i}$, where $i\in \{1,2,\ldots,m-2\}$, and $\zeta(x_{4})=k$. If we take $\mathbb{T}_{2}=\mathbb{T}-\{x_{1},x_{2}\}$, then $\Gamma_R(\mathbb{T})=\Gamma_R(\mathbb{T}_{2})+1$, we get:
		$ABC(\mathbb{T})= ABC(\mathbb{T}_{2})+\frac{1}{\sqrt{k}}\left(\sqrt{\frac{m+k-2}{m}}-\sqrt{\frac{m+k-3}{m-1}}\right)+\sum_{i=1}^{m-1}\frac{1}{\sqrt{b_{i}}}\left(\sqrt{\frac{m+b_{i}-2}{m}}-\sqrt{\frac{m+b_{i}-3}{m-1}}\right)+\sqrt{2}
		\leq  \sqrt{n-\Gamma_R-1}\sqrt{n-\Gamma_R-2}-\left(\Gamma_R -3\right)\left(\frac{1}{2}-\frac{3}{\sqrt{5}}\right)+\frac{1}{\sqrt{k}}(\sqrt{\frac{m+k-2}{m}}-\sqrt{\frac{m+k-3}{m-1}})+\sum_{i=1}^{m-2}\frac{1}{\sqrt{b_{i}}}\left(\sqrt{\frac{m+b_{i}-2}{m}}-\sqrt{\frac{m+b_{i}-3}{m-1}}\right)+\sqrt{2}\leq  \mathbb{f}_{max}(n,\Gamma_R)+\sqrt{n-\Gamma_R-1}\sqrt{n-\Gamma_R-2}-\sqrt{n-\Gamma_R+1} \sqrt{n-\Gamma_R}+\frac{1}{\sqrt{k}}\left(\sqrt{\frac{m+k-2}{m}}-\sqrt{\frac{m+k-3}{m-1}}\right)+\sum_{i=1}^{m-2}\\ \frac{1}{\sqrt{b_{i}}}\left(\sqrt{\frac{m+b_{i}-2}{m}}-\sqrt{\frac{m+b_{i}-3}{m-1}}\right)+\sqrt{2}+\left(\frac{1}{2}-\frac{3}{\sqrt{5}}\right)
		\leq  \mathbb{f}_{max}(n,\Gamma_R)+\sqrt{n-\Gamma_R-1}\sqrt{n-\Gamma_R-2}-\sqrt{n-\Gamma_R+1}\sqrt{n-\Gamma_R}+\frac{1}{\sqrt{k}}\left(\sqrt{\frac{m+k-2}{m}}-\sqrt{\frac{m+k-3}{m-1}}\right)
		+\frac{m-2}{\sqrt{5}}\left(\sqrt{\frac{m+3}{m}}-\sqrt{\frac{m+2}{m-1}}\right)+\sqrt{2}+\left(\frac{1}{2}-\frac{3}{\sqrt{5}}\right)$.\\
		We assume that $\alpha(n)=\sqrt{n-\Gamma_R-1}\sqrt{n-\Gamma_R-2}-\sqrt{n-\Gamma_R+1}\sqrt{n-\Gamma_R}$,  $\beta(m,k)=\frac{1}{\sqrt{k}}\left(\sqrt{\frac{m+k-2}{m}}-\sqrt{\frac{m+k-3}{m-1}}\right)$ and $\gamma(m)=\frac{m-2}{\sqrt{5}}\left(\sqrt{\frac{m+3}{m}}-\sqrt{\frac{m+2}{m-1}}\right)$. By using Lemma \ref{lem6}, we get $-\sqrt{6}\leq\alpha(n)<-2$, by Lemma \ref{lem2} $\beta(m,k)$ is decreasing function, and $\gamma(m)$ is negative function for any $m\geq 3$. Hence, $ABC(\mathbb{T}) \leq  \mathbb{f}_{max}(n,\Gamma_R)+\alpha(n)+\beta(m,k)+\gamma(m)+\sqrt{2}+\left(\frac{1}{2}-\frac{3}{\sqrt{5}}\right)<\mathbb{f}_{max}(n,\Gamma_R)$.
		
	\end{proof}
	
	\begin{theorem} \label{them4}
		\textnormal{Suppose that $\mathbb{T}$ be a tree with order $n$ and $RDN$ $ \Gamma_{R}$, then we have $ABC(\mathbb{T})= \mathbb{f}_{min}(n,\Gamma_R)$
			if and only if $\mathbb{T} = \mathbb{P}_{n}$}.
	\end{theorem}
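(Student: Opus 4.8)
Here is the plan. Since the statement is an if-and-only-if, I would treat the two directions separately; the forward (characterization) direction is essentially a strictness refinement of Theorem \ref{them2}.

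\emph{The direction $\mathbb{T}=\mathbb{P}_{n}\Rightarrow ABC(\mathbb{T})=\mathbb{f}_{min}(n,\Gamma_R)$.} Here I would simply evaluate both sides. In $\mathbb{P}_{n}$ (with $n\geq 3$) every edge either joins a leaf to a vertex of degree $2$ or joins two vertices of degree $2$, and in both cases its contribution to the $ABC$ sum equals $\sqrt{\frac{1}{2}}$; since $\mathbb{P}_{n}$ has $n-1$ edges, $ABC(\mathbb{P}_{n})=\frac{n-1}{\sqrt{2}}$. By Theorem \ref{th1}, $\Gamma_R(\mathbb{P}_{n})=\lceil\frac{2n}{3}\rceil$, and substituting $\Gamma_R=\lceil\frac{2n}{3}\rceil$ into the definition of $\mathbb{f}_{min}$ turns its last two summands into multiples of $\lceil\frac{2n}{3}\rceil$ with opposite coefficients $\frac{3}{4}-\frac{1}{\sqrt{2}}$ and $\frac{1}{\sqrt{2}}-\frac{3}{4}$, so they cancel and leave exactly $\frac{1}{\sqrt{2}}(n-1)$. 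Hence $ABC(\mathbb{P}_{n})=\mathbb{f}_{min}(n,\Gamma_R)$.

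\emph{The converse.} I would argue by contraposition and re-run the induction on $n$ from the proof of Theorem \ref{them2}, this time recording strict inequalities. If $\mathbb{T}\neq\mathbb{P}_{n}$, then $\mathbb{T}$ has a vertex of degree at least $3$. In the base case $n=4$ the only non-path tree is $\mathbb{S}_{4}$, and $ABC(\mathbb{S}_{4})=3\sqrt{\frac{2}{3}}>\mathbb{f}_{min}(4,\Gamma_R(\mathbb{S}_{4}))$. For $n>4$, fix a diametral path $P_{d+1}=\{x_{1},\ldots,x_{d+1}\}$; inspecting $\zeta(x_{2}),\zeta(x_{3}),\zeta(x_{4})$ places $\mathbb{T}$ into one of Cases 1, 2.1, 2.2.1, 2.2.2, 2.3 of that proof (the one configuration not literally covered, a star $\mathbb{S}_{n}$, is handled directly: $ABC(\mathbb{S}_{n})=\sqrt{(n-1)(n-2)}$ and $\Gamma_R(\mathbb{S}_{n})=2$, and one checks $\sqrt{(n-1)(n-2)}>\mathbb{f}_{min}(n,2)$). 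In each of those cases the displayed chain of estimates terminates by adding a quantity already shown to be \emph{strictly} positive — e.g. $\alpha(m,j)>\frac{\sqrt{5}}{2\sqrt{2}}>\frac{3}{4}$ in Case 1 via Lemmas \ref{lem3} and \ref{lem4}, $\alpha(m,k)>0$ in Case 2.1, $\beta(k)>0$ in Case 2.2.1, $\alpha(k)+\beta(k,u)+\frac{2}{\sqrt{2}}+\sqrt{\frac{2}{3}}-\frac{5}{\sqrt{2}}>0$ in Case 2.2.2, and $\alpha(l)+\beta(l,k)-\frac{3}{4}>0$ in Case 2.3 — so in fact $ABC(\mathbb{T})>\mathbb{f}_{min}(n,\Gamma_R)$. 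Therefore equality in Theorem \ref{them2} can occur only when no branch vertex exists, i.e. only when $\mathbb{T}=\mathbb{P}_{n}$, which together with the first part completes the proof.

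\emph{Main obstacle.} The difficulty is bookkeeping rather than analysis, since all numerically delicate inequalities are already packaged in Lemmas \ref{lem1}--\ref{lem6}. The real work is to verify that (i) the five cases of Theorem \ref{them2}, together with the star, genuinely exhaust every tree that is not a path (and to dispose of the few boundary degree ranges in Cases 2.1--2.2), and (ii) each reduction used there is strict. Point (ii) is immediate for Cases 2.1--2.3; for Case 1 it depends on reading Lemma \ref{lem4} as asserting the strict bound $ABC(\mathbb{T})>\mathbb{f}_{min}(n,\Gamma_R)$ (the ``$\mathbb{f}_{max}$'' written in its proof is evidently a typo for $\mathbb{f}_{min}$), which in turn rests on $\frac{\sqrt{5}}{2\sqrt{2}}>\frac{3}{4}$. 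A minor additional point is handling the ceiling $\lceil\frac{2n}{3}\rceil$ correctly when the order drops by $1,2,3$ or $5$ in the reductions, and noting that the forward computation needs $n\geq 3$.
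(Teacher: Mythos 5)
Your forward direction coincides with the paper's \emph{entire} proof of this theorem: the paper computes $ABC(\mathbb{P}_{n})=\frac{1}{\sqrt{2}}(n-1)$, invokes $\Gamma_{R}(\mathbb{P}_{n})=\left\lceil\frac{2n}{3}\right\rceil$ from Theorem \ref{th1}, and observes that the two $\left\lceil\frac{2n}{3}\right\rceil$-terms in $\mathbb{f}_{min}$ cancel (the proof writes $\mathbb{f}_{max}$ throughout, which, as you note for Lemma \ref{lem4}, is a typo for $\mathbb{f}_{min}$). The paper stops there: the ``only if'' direction is never argued, so the characterization as stated is not actually established in the text, and your contrapositive argument supplies the missing half. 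Your plan --- every non-path tree has a vertex of degree at least $3$, hence falls into one of the cases of Theorem \ref{them2} or is a star handled by direct computation, and each case of that proof ends by adding a quantity the paper itself records as strictly positive ($\alpha(m,j)>\frac{\sqrt{5}}{2\sqrt{2}}>\frac{3}{4}$, $\alpha(m,k)>0$, $\beta(k)>0$, and so on) --- is the natural and correct way to do this, and your separate treatment of $\mathbb{S}_{n}$ is genuinely needed, since Lemma \ref{lem4} presupposes a neighbour $y_{m}$ of degree $j\geq 2$. The only reservation is the one you already flag: the strictness argument is only as solid as the exhaustiveness of the case analysis in Theorem \ref{them2}, which has unexamined boundary configurations (for instance Case 2.2 assumes the third neighbour of $x_{3}$ is a leaf, and Case 2.1 restricts $3\leq k\leq m$), so a fully rigorous version would have to patch those; but even as a plan your proposal does strictly more than the paper's proof, which establishes only one implication of the stated equivalence.
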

	\begin{proof}
		Using Equation \ref{equ1}, we know that for the path graph $\mathbb{P}_{n}$, the  $ABC$ index is: $ABC(\mathbb{P}_{n})=\frac{1}{\sqrt{2}}(n-1)$. By Theorem \ref{th1}, the $RDN$ of the path graph $\mathbb{P}_{n}$ is, $\Gamma_{R}(\mathbb{P}_{n})=\left\lceil\frac{2n}{3}\right\rceil$. Substituting this into the formula for $\mathbb{f}_{max}(n,\Gamma_R)$, we get $\mathbb{f}_{max}(n,\Gamma_R)=\frac{1}{\sqrt{2}}(n-1)+\left\lceil\frac{2n}{3}\right\rceil\left(\frac{3}{4}-\frac{1}{\sqrt{2}}\right)+\Gamma_R(\mathbb{P}_{n})\left(\frac{1}{\sqrt{2}}-\frac{3}{4}\right)=\frac{1}{\sqrt{2}}(n-1)=ABC(\mathbb{P}_{n})$.
	\end{proof}
	\begin{theorem} \label{them5}
		\textnormal{Suppose that $\mathbb{T}$ a tree with order $n$ and the $RDN$ $ \Gamma_{R}$, then we have $ABC(\mathbb{T})= \mathbb{f}_{max}(n,\Gamma_R)$ if and only if $\mathbb{T}= S_{n}$.}
	\end{theorem}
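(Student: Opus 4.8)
The plan is to split the equivalence into its two implications: the backward direction ($\mathbb{T}=S_n\Rightarrow$ equality) is a direct evaluation, and the forward direction is obtained by squeezing out the strictness already built into the proof of Theorem~\ref{them3}.

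For the backward implication I would simply compute both sides on the star. Since $S_n$ consists of one vertex of degree $n-1$ joined to $n-1$ leaves, every edge contributes $\sqrt{\frac{(n-1)+1-2}{(n-1)\cdot 1}}$, so $ABC(S_n)=(n-1)\sqrt{\frac{n-2}{n-1}}=\sqrt{n-1}\,\sqrt{n-2}$. Putting weight $2$ on the centre and $0$ on every leaf is a Roman dominating function of weight $2$, and no weight-$1$ function is Roman dominating (a weight-$0$ vertex would lack a weight-$2$ neighbour), so $\Gamma_R(S_n)=2$. Substituting $\Gamma_R=2$ gives $\mathbb{f}_{max}(n,2)=\sqrt{n-2+1}\,\sqrt{n-2}-(2-2)\bigl(\tfrac12-\tfrac{3}{\sqrt5}\bigr)=\sqrt{n-1}\,\sqrt{n-2}=ABC(S_n)$, as wanted.

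For the forward implication I would argue by induction on $n$ that $ABC(\mathbb{T})<\mathbb{f}_{max}(n,\Gamma_R)$ for every tree $\mathbb{T}\neq S_n$, reusing the case analysis of Theorem~\ref{them3}. In the base case $n=4$ the only non-star tree is $P_4$, and $ABC(P_4)=\tfrac{3}{\sqrt2}<\mathbb{f}_{max}(4,3)$ was already checked. For $n\geq 5$ a tree that is not a star has diameter at least $3$, hence admits a diametral path $P_{d+1}$ with $d\geq 3$; examining the second vertex $x_2$ of this path places $\mathbb{T}$ in Case~1 of Theorem~\ref{them3} when $\zeta(x_2)\geq 3$ (then $x_2$ is adjacent to $\zeta(x_2)-1$ leaves and to $x_3$ with $\zeta(x_3)\geq 2$) and in Case~2 when $\zeta(x_2)=2$ (here $x_4$ exists because $d\geq 3$). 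In both cases the estimate in Theorem~\ref{them3} closes with a \emph{strict} inequality, because the correction term there is strictly negative --- $\eta(n,m,j)<0$ in Case~1 by Lemmas~\ref{lem3} and \ref{lem5}, and $\alpha(n)+\beta(m,k)+\gamma(m)+\sqrt2+(\tfrac12-\tfrac{3}{\sqrt5})<0$ in Case~2 by Lemmas~\ref{lem2} and \ref{lem6} --- so $ABC(\mathbb{T})<\mathbb{f}_{max}(n,\Gamma_R)$. Consequently equality forces $\mathbb{T}=S_n$.

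The part I expect to need the most care is the combinatorial one rather than the analytic one: I must verify that the two cases of Theorem~\ref{them3} really do exhaust all non-star trees on $n\geq5$ vertices (the $\zeta(x_2)$ dichotomy above), so that no non-star tree escapes the reduction, and that in each branch the surplus term stays strictly below $0$ with room to spare. Once that is pinned down, combining the strictness with the base-case check $ABC(P_4)<\mathbb{f}_{max}(4,3)$ and the identity $ABC(S_n)=\mathbb{f}_{max}(n,2)$ delivers the characterization.
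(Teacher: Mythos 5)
Your proposal is correct, and it is actually more complete than the paper's own argument. The paper's proof of this theorem consists only of your backward implication: it computes $ABC(S_n)=\sqrt{n-1}\sqrt{n-2}$, notes $\Gamma_R(S_n)=2$, and checks $\mathbb{f}_{max}(n,2)=\sqrt{n-1}\sqrt{n-2}$; the ``only if'' direction is never addressed there, even though the theorem is stated as an equivalence. You supply exactly the missing half: the observation that a non-star tree on $n\geq 4$ vertices has diameter at least $3$, so the dichotomy on $\zeta(x_2)$ along a diametral path places it in Case~1 or Case~2 of Theorem~\ref{them3} (in Case~1 because every neighbour of $x_2$ other than $x_3$ must be a leaf by diametrality, and $\zeta(x_3)\geq 2$ since $x_4$ exists), and both cases close with a strictly negative surplus, so equality forces $\mathbb{T}=S_n$. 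One small simplification: you do not actually need a fresh induction for the forward direction --- a single application of Theorem~\ref{them3} to the reduced tree, combined with the strictly negative correction term, already gives $ABC(\mathbb{T})<\mathbb{f}_{max}(n,\Gamma_R)$ for every non-star $\mathbb{T}$ --- but phrasing it as induction is harmless. Your approach buys the rigour the paper's proof lacks; the paper's version buys only brevity.
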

	\begin{proof} 
		By using Equation \ref{equ1}, we know that for the star graph $\mathbb{S}_{n}$, the  $ABC$ index in the term of order of $\mathbb{S}_{n}$ is given by: $ABC(\mathbb{S}_{n})=\sqrt{n-1}\sqrt{n-2}$ and $RDN$ of $\mathbb{S}_{n}$ is 2 ($\Gamma_{R}(\mathbb{S}_{n})=2$). Hence, we substitute this into the formula for $\mathbb{f}_{max}(n,\Gamma_R(\mathbb{S}_{n}))$, giving: $\mathbb{f}_{max}(n,\Gamma_R(\mathbb{S}_{n}))=\sqrt{n-1}\sqrt{n-2}=ABC(\mathbb{S}_{n})$. This is exactly equal to the $ABC$ index of $\mathbb{S}_{n}$, so we conclude that: $ABC(\mathbb{S}_{n})=\mathbb{f}_{max}(n,\Gamma_R(\mathbb{S}_{n}))$.
	\end{proof}
	
	\section{Conclusion}
	
	In this study, we examined the $ABC$ index of tree graphs, focusing on its dependence on the $RDN$ and the tree's order. We established the lower and upper bounds of the $ABC$ index, as detailed in Theorems \ref{them2} and \ref{them3}, demonstrating that these bounds are determined by the tree’s order and $RDN$. Specifically, Theorem \ref{them4} shows that $\mathbb{P}_{n}$ achieve the minimum $ABC$ index, while Theorem \ref{them5} reveals that $\mathbb{S}_{n}$ trees attain the maximum index, illustrating the significant impact of the $RDN$ on these extremal values. These results enhance the understanding of the relationship between tree parameters and topological indices, providing a basis for future research to explore these interactions in broader graph classes and uncover new connections between the $ABC$ index and other graph invariants.

	\subsection*{Declarations}\noindent\textbf{Author Contribution Statement} All authors contributed equally to the paper.\\
	
	\noindent\textbf{Declaration of competing interest} The authors have no conflict of interest to disclose.\\
	
	\noindent\textbf{Data availability statements} All the data used to find the results is included in the manuscript.\\
	
	\noindent\textbf{Acknowledgment} This research was supported by the Ministry of Higher Education (MOHE) through the Fundamental Research Grant Scheme (FRGS/1/2022/STG06/UMT/03/4).\\
	
	\noindent\textbf{Ethical statement} This article contains no studies with humans or animals.
	\bibliographystyle{plain}
	\bibliography{biblo}	
\end{document}